\newtheorem{theorem}{Theorem}[section]
\newtheorem{corollary}[theorem]{Corollary}
\newtheorem{example}[theorem]{Example}
\newtheorem{proposition}[theorem]{Proposition}
\theoremstyle{remark}
\newtheorem{remark}[theorem]{Remark}
\numberwithin{equation}{section}
\begin{document}
\title{A NEUTRAL RELATION BETWEEN METALLIC\ STRUCTURE\ AND ALMOST\ QUADRATIC 
$\phi $-STRUCTURE}
\author{Sinem G\"{o}n\"{u}l$^{1}$}
\email{sinemarol@gmail.com}
\author{\.{I}rem K\"{u}peli Erken$^{2}$}
\address{$^{2}$Faculty of Engineering and Naturel Sciences, Department of
Mathematics, Bursa Technical University, Bursa, Turkey }
\email{irem.erken@btu.edu.tr}
\author{Aziz Yazla$^{1}$}
\address{$^{1}$Uludag University, Science Institute, Gorukle 16059,
Bursa-TURKEY}
\email{501411002@ogr.uludag.edu.tr}
\author{Cengizhan Murathan$^{3}$}
\address{$^{3}$Art and Science Faculty, Department of Mathematics, Uludag
University, 16059 Bursa, TURKEY}
\email{cengiz@uludag.edu.tr}
\subjclass[2010]{Primary 53C15, 53C25, 53C55; Secondary 53D15.}
\keywords{Polynomial structure, golden structure, metallic structure, almost
quadratic $\phi $-structure.}

\begin{abstract}
In this paper, metallic structure and almost quadratic metric $\phi $%
-structure are studied. Based on metallic (polynomial) Riemannian manifold,
Kenmotsu quadratic metric manifold, cosymplectic quadratic metric manifold
are defined and gave some examples. Finally, we construct quadratic $\phi $%
-structure on the hypersurface $M^{n}$ of a locally metallic Riemannian
manifold $\tilde{M}^{n+1}.$
\end{abstract}

\maketitle

\section{I\textbf{ntroduction}}

\label{introduction} In \cite{GOLYANO} and \cite{SP}, S. I. Goldberg, K.Yano
and N. C. Petridis defined a new type of structure which is called a
polynomial structure on a $n$-dimensional differentiable manifold $M$.\ The
polynomial structure of degree 2 can be given by 
\begin{equation}
J^{2}=pJ+qI,  \label{M1}
\end{equation}%
where $J$ is a$\ (1,1)$ tensor field on $M,$ $I$ is the identity operator on
the Lie algebra $\Gamma (TM)$ of vector fields on $M$ and $p,q$ are real
numbers. This structure can be also viewed as a generalization of\ following
well known structures :

$\cdot $ If $p=0$, $q=1$, then $J$\ is called almost product or almost para
complex structure and denoted by $F$ \cite{PITIS}, \cite{NAVEIRA},

$\cdot $ If $p=0$, $q=-1$, then $J$ is called almost complex structure \cite%
{YANO},

$\cdot $ If $p=1$, $q=1$, then $J$\ is called golden structure \cite{H1},%
\cite{H2},

$\cdot $ If $p$ is positive integer and $q=-1$, then $J$ is called
poly-Norden structure \cite{SAHIN},

$\cdot $ If $p=1$, $q=\frac{-3}{2}$, then $J$ is called almost complex
golden structure \cite{Bilen},

$\cdot $ If $p$ and $q$ are positive integers, then $J$ is called metallic
structure \cite{H3}.

If a differentiable manifold endowed with a metallic structure $J$ then the
pair $(M,J)$ is called metallic manifold. Any metallic structure $J$ on $M$
induces two almost product structures on $M$%
\begin{equation*}
F_{\pm }=\pm \frac{2}{2\sigma _{p,q}-p}J-\frac{p}{2\sigma _{p,q}-p}I\text{,}
\end{equation*}%
where $\sigma _{p,q}=\frac{p+\sqrt{p^{2}+4q}}{2}$ is the metallic number,
which is the positive solution of the equation \ $x^{2}-px-q=0$ for p and q
non zero natural numbers. Conversely, any almost product structure $F$ on $M$
\ induces two metallic structures on $M$%
\begin{equation*}
J_{\pm }=\pm \frac{2\sigma _{p,q}-p}{2}F+\frac{p}{2}I.
\end{equation*}%
If $M$ is Riemannian, the metric $g$ is said to be compatible with\ the
polynomial structure $J$ if 
\begin{equation}
g(JX,Y)=g(X,JY)  \label{M2}
\end{equation}%
for $X,Y\in \Gamma (TM)$. In this case $(g,J)$ is called metallic Riemannian
structure and $(M,g,J)$ metallic Riemannian manifold (\cite{DK}) By (\ref{M1}%
) and (\ref{M2}), one can get%
\begin{equation}
g(JX,JY)=pg(JX,Y)+qg(X,Y),  \label{M3}
\end{equation}%
for $X,Y\in \Gamma (TM).$The Nijenhuis torsion $N_{K}$ for arbitary tensor
field $K$ of type $(1,1)$ on $M$ is tensor field of type $(1,2)$ defined by 
\begin{equation}
N_{K}(X,Y)=K^{2}[X,Y]+[KX,KY]-K[KX,Y]-K[X,KY]  \label{M4}
\end{equation}%
where $[X,Y]$ is the commutator for arbitrary differentiable vector fields $%
X,Y\in \Gamma (TM).$ The polynomial structure $J$ is said to be integrable
if $N_{J}$ $=0.$ A metallic Riemannian structure $J$ is said to be locally
metallic if $\nabla J=0$, where $\nabla $ is the Levi-Civita connection with
respect to $g$. Thus one can deduce that a locally metallic Riemannian
manifold is always integrable.

On the other hand, P. Debnath and A. Konar \cite{DK} recently introduced a
new type of structure named as almost quadratic $\phi $-structure $(\phi
,\eta ,\xi )$ on a $n$-dimensional differentiable manifold $M$, determined
by a $(1,1)$-tensor field $\phi $, a unit vector field $\xi $ and a $1$-form 
$\eta $ which satisfy the relations :

\begin{equation*}
\phi \xi =0
\end{equation*}%
\begin{equation}
\phi ^{2}=a\phi +b(I-\eta \otimes \xi );\text{ \ }a^{2}+4b\neq 0  \label{DK1}
\end{equation}%
where $a$ is arbitrary constant and $b$ is non zero constant. If $M$ is
Riemannian manifold with the Riemannian metric $g$ is said to be compatible
with the polynomial structure $\phi $ if 
\begin{equation}
g(\phi X,Y)=g(X,\phi Y)  \label{CR}
\end{equation}%
which is equivalent to 
\begin{equation}
g(\phi X,\phi Y)=pg(\phi X,Y)+q(g(X,Y)-\eta (X)\eta (Y)).  \label{MR}
\end{equation}%
In this case $(g,\phi ,\eta ,\xi )$ is called almost quadratic metric $\phi $%
-structure.The manifold $M$ is said to be an almost quadratic metric $\phi $%
-manifold if it is endowed with an almost quadratic metric $\phi $-structure 
\cite{DK}. They proved the necessary and sufficient conditions for an almost
quadratic $\phi $-manifold induces an almost contact or almost paracontact
manifold.

Recently, A.M. Blaga and C. E. Hretcanu \cite{BH} characterized the metallic
structure on the product of two metallic manifolds in terms of metallic maps
and provided a necessary and sufficient condition for the warped product of
two locally metallic Riemannian manifolds to be locally metallic.

The paper is organized in the following way.

Section $2$ is preliminary section, where we recall some properties of an
almost quadratic metric $\phi $-structure and warped product manifolds. In
Section $3$, we define $(\beta ,\phi )$-Kenmotsu quadratic metric manifold
and cosymplectic quadratic metric manifold. We mainly prove that if $%
(N,g,\nabla ,J)$ is a locally metallic Riemannian manifold,$\ $then $%
%TCIMACRO{\U{211d} }%
%BeginExpansion
\mathbb{R}
%EndExpansion
\times _{f}N$ is a $(-\frac{f^{\prime }}{f},\phi )$-Kenmotsu quadratic
metric manifold and we show that every differentiable manifold $M$ endowed
with an almost quadratic $\phi $-structure $(\phi ,\eta ,\xi )$ admits
associated Riemannian metric. We prove that on a $(\beta ,\phi )$-Kenmotsu
quadratic metric manifold the Nijenhuis tensor $N_{\phi }\equiv 0$. We also
gave examples of $(\beta ,\phi )$-Kenmotsu quadratic metric manifold.
Section $4$ is devoted to quadratic $\phi $-hypersurfaces of metallic
Riemannian manifolds. We show that there are almost quadratic $\phi $%
-structures on hypersurfaces of metallic Riemannian manifolds .Then \ we
give the necessary and sufficent condition characteristic vector field $\xi $
to be Killing in a quadratic metric $\phi $-hypersurface. Furthermore we
obtain Riemannian curvature tensor of a quadratic metric $\phi $%
-hypersurface.

\section{Preliminaries}

\bigskip Let $M^{n\text{ }}$be an almost quadratic $\phi $-manifold. As in
almost contact manifold, Debnath and Konar \cite{DK} proved that $\eta \circ
\phi =0,\eta (\xi )=1$ and $rank$ $\phi =n-1$.They also show that the eigen
values of the structure tensor $\phi $ are $\frac{a+\sqrt{a^{2}+4b}}{2}$, $%
\frac{a-\sqrt{a^{2}+4b}}{2}$ and $0.$ If $\lambda _{i}$,$\sigma _{j}$ and $%
\xi $ are eigen vectors corresponding to the eigen values $\frac{a+\sqrt{%
a^{2}+4b}}{2},$ $\frac{a-\sqrt{a^{2}+4b}}{2}$ and $0$ of $\phi $,
respectively, then $\lambda _{i}$,$\sigma _{j}$ and $\xi $ are linearly
independent. Denote distributions

$\cdot \Pi _{p}=\{X\in \Gamma (TM):\alpha LX=-\phi ^{2}X-(\frac{\sqrt{%
a^{2}+4b}-a}{2})\phi ,\alpha =-2b-\frac{a^{2}+a\sqrt{a^{2}+4b}}{2}\};\dim
\Pi _{p}=p,$

$\cdot \Pi _{q}=\{X\in \Gamma (TM):\beta QX=-\phi ^{2}X+(\frac{\sqrt{a^{2}+4b%
}+a}{2})\phi X,\beta =-2b-\frac{a^{2}-a\sqrt{a^{2}+4b}}{2}\};\dim \Pi
_{q}=q, $

$\cdot \Pi _{1}=\{X\in \Gamma (TM):\beta RX=\phi ^{2}X-a\phi X-bX=-b\eta
(X)\xi \};\dim \Pi _{1}=1.$

By above notations, P. Debmath and A. Konar proved following theorem.

\begin{theorem}[\protect \cite{DK}]
The necessary and sufficient condition that a manifold $M^{n}$ will be an
almost quadratic $\phi $-manifold \ is that at each point of the manifold $%
M^{n}$ it contains distributions $\Pi _{p},\Pi _{q}$ and $\Pi _{1}$ such
that $\Pi _{p}\cap \Pi _{q}=\{ \varnothing \},\Pi _{p}\cap \Pi _{1}=\{
\varnothing \},\Pi _{q}\cap \Pi _{1}=\{ \varnothing \}$ and $\Pi _{p}\cup
\Pi _{q}\cup \Pi _{1}=TM$.
\end{theorem}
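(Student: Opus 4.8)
The plan is to read the quadratic relation (\ref{DK1}), together with $\phi\xi=0$, as the assertion that $\phi$ is pointwise diagonalisable with spectrum $\{\mu_{+},\mu_{-},0\}$, where $\mu_{\pm}=\frac{a\pm\sqrt{a^{2}+4b}}{2}$ are the roots of $t^{2}-at-b=0$; these are simple and distinct because $a^{2}+4b\neq 0$, and both are non-zero because $\mu_{+}\mu_{-}=-b\neq 0$. One then identifies $\Pi_{p},\Pi_{q},\Pi_{1}$ with the eigendistributions of $\phi$ for the eigenvalues $\mu_{+},\mu_{-},0$, after which the theorem reduces to the standard fact that an endomorphism is diagonalisable with a prescribed spectrum precisely when the ambient space is the internal direct sum of the corresponding eigenspaces.

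For necessity, assuming $(\phi,\eta,\xi)$ is an almost quadratic $\phi$-structure, I would use the facts recalled above ($\eta\circ\phi=0$, $\eta(\xi)=1$, $\mathrm{rank}\,\phi=n-1$) and restrict (\ref{DK1}) to the distribution $\mathcal{D}=\ker\eta$, where it becomes $\phi^{2}X=a\phi X+bX$; hence $\phi|_{\mathcal{D}}$ annihilates $t^{2}-at-b$ and so $\mathcal{D}=\ker(\phi-\mu_{+}I)\oplus\ker(\phi-\mu_{-}I)$ fibrewise. Since $\eta(\xi)=1$ forces $\xi\notin\mathcal{D}$, while $\phi\xi=0$ and $\mathrm{rank}\,\phi=n-1$ give $\ker\phi=\mathbb{R}\xi$, and since $X-\eta(X)\xi\in\mathcal{D}$ for every $X$, one obtains $TM=\ker(\phi-\mu_{+}I)\oplus\ker(\phi-\mu_{-}I)\oplus\mathbb{R}\xi$ at each point. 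The remaining step is to match these eigenspaces with $\Pi_{p},\Pi_{q},\Pi_{1}$ by verifying that the operators $L,Q,R$ appearing in their definitions are exactly the spectral projectors $\frac{\phi(\phi-\mu_{-}I)}{\mu_{+}(\mu_{+}-\mu_{-})}$, $\frac{\phi(\phi-\mu_{+}I)}{\mu_{-}(\mu_{-}-\mu_{+})}$, $\frac{(\phi-\mu_{+}I)(\phi-\mu_{-}I)}{\mu_{+}\mu_{-}}$ --- equivalently, that $\alpha,\beta$ coincide with the normalising denominators $-\mu_{+}(\mu_{+}-\mu_{-})$, $-\mu_{-}(\mu_{-}-\mu_{+})$ and that $\phi^{2}-a\phi-bI=-b\,\eta\otimes\xi$. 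The dimension tally $\dim\Pi_{p}=p$, $\dim\Pi_{q}=q$, $\dim\Pi_{1}=1$ with $p+q=n-1$ then gives at once the pairwise trivial intersections and $\Pi_{p}\cup\Pi_{q}\cup\Pi_{1}=TM$.

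For sufficiency, I would start from the hypothesis that $T_{x}M=\Pi_{p}\cup\Pi_{q}\cup\Pi_{1}$ with pairwise trivial intersections; since the three dimensions sum to $n$, this is an internal direct sum $T_{x}M=\Pi_{p}\oplus\Pi_{q}\oplus\Pi_{1}$. Unwinding the definitions shows that a vector of $\Pi_{p}$ (resp.\ $\Pi_{q}$) is a $\mu_{+}$- (resp.\ $\mu_{-}$-) eigenvector of $\phi$ annihilated by $\eta$, and that $\Pi_{1}=\mathbb{R}\xi$ with $\phi\xi=0$. Because both sides of (\ref{DK1}) are linear in $X$, it is enough to check (\ref{DK1}) on each summand: for $X\in\Pi_{p}$ one has $\phi^{2}X=\mu_{+}^{2}X=(a\mu_{+}+b)X=a\phi X+b(X-\eta(X)\xi)$ using $\eta(X)=0$, and likewise on $\Pi_{q}$, while on $\mathbb{R}\xi$ it follows from $\phi\xi=0$. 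Together with $\phi\xi=0$ and the standing hypothesis $a^{2}+4b\neq 0$, this makes $(\phi,\eta,\xi)$ an almost quadratic $\phi$-structure.

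The bookkeeping with $L,Q,R$ and the constants $\alpha,\beta$ is routine. The point I expect to demand care is, in the sufficiency direction, deducing $\eta|_{\Pi_{p}}=\eta|_{\Pi_{q}}=0$ (equivalently $\eta\circ\phi=0$) from the defining equations of the distributions alone: in the necessity part this identity was already available as a consequence of the structure, whereas here it must be extracted from the definitions of $\Pi_{p},\Pi_{q}$ together with the completeness $\Pi_{p}\cup\Pi_{q}\cup\Pi_{1}=TM$ (i.e.\ from $L+Q+R=I$), without circularity. Once that is settled, the rest is fibrewise linear algebra.
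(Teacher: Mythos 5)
First, a point of comparison you could not have known: the paper does not prove this statement at all; it is quoted from \cite{DK}, and the only trace of the intended argument in the paper is the summary in the preliminaries (the eigenvalues $\tfrac{a+\sqrt{a^{2}+4b}}{2}$, $\tfrac{a-\sqrt{a^{2}+4b}}{2}$, $0$, the linear independence of the corresponding eigenvectors, and the projector-type operators $L,Q,R$ entering the definitions of $\Pi_{p},\Pi_{q},\Pi_{1}$). Your necessity argument is exactly that mechanism: $\phi$ annihilates $t(t^{2}-at-b)$, the roots are simple, so $TM$ splits fibrewise into the three eigendistributions, and $L,Q,R$ are the spectral projectors. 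Two small caveats there: distinctness of $\mu_{\pm}$ follows from $a^{2}+4b\neq 0$, but their \emph{realness} needs $a^{2}+4b>0$, which the paper (following \cite{DK}) assumes implicitly by writing $\sqrt{a^{2}+4b}$; and the identification of $\alpha,\beta,L,Q,R$ with the spectral data, which you defer as ``routine bookkeeping'', is where the paper's garbled transcription of the definitions would actually have to be repaired.

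The genuine gap is in the sufficiency direction, and it is the one you flag and then leave open. Under your reading --- $\phi,\eta,\xi$ fixed in advance and $\Pi_{p},\Pi_{q}$ their eigendistributions --- the missing identity $\eta|_{\Pi_{p}}=\eta|_{\Pi_{q}}=0$ simply does not follow from the stated hypotheses: nothing prevents a $1$-form $\eta$ with $\eta(\xi)=1$ from being nonzero on a $\mu_{+}$-eigenvector $X$, and then $\phi^{2}X=\mu_{+}^{2}X\neq a\phi X+b(X-\eta(X)\xi)$, so the implication as you set it up cannot be closed; the identity $\eta\circ\phi=0$ is a \emph{consequence} of the structure in \cite{DK}, not something available beforehand. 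The way the ``if'' part is meant to carry content is constructive: given complementary distributions $\Pi_{p},\Pi_{q},\Pi_{1}$ of dimensions $p,q,1$ with pairwise zero intersection spanning $TM$ (the statement's ``$\cup$'' and ``$\{\varnothing\}$'' must be read as direct sum and zero intersection, as you do), one \emph{defines} $\xi$ spanning $\Pi_{1}$, takes $\eta$ to be the dual $1$-form annihilating $\Pi_{p}\oplus\Pi_{q}$, and defines $\phi$ to act as $\mu_{+}$ on $\Pi_{p}$, $\mu_{-}$ on $\Pi_{q}$ and $0$ on $\Pi_{1}$. Then $\eta|_{\Pi_{p}\oplus\Pi_{q}}=0$ holds by construction, and your summand-by-summand verification of (\ref{DK1}) goes through verbatim, with $\phi\xi=0$ and $\eta(\xi)=1$ also holding by construction. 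With that reformulation your argument is complete and is, as far as one can tell, the same spectral-decomposition proof intended in \cite{DK}.
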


Let $(M^{m},g_{M})$ and $(N^{n},g_{N})$ be two Riemannian manifolds and $%
\tilde{M}$ =$M\times N.$ The warped product metric $<,>$ on $\tilde{M}$ is
given by 
\begin{equation*}
<\tilde{X},\tilde{Y}>=g_{M}(\pi {}_{\ast }\tilde{X},\pi {}_{\ast }\tilde{Y}%
)+(f\circ \pi )^{2}g_{N}(\sigma {}_{\ast }\tilde{X},\sigma {}_{\ast }\tilde{Y%
})
\end{equation*}%
for every $\tilde{X}$ and $\tilde{Y}$ $\in $ $\Gamma (T\tilde{M})$ where, $%
f:M\overset{C^{\infty }}{\rightarrow }%
%TCIMACRO{\U{211d} }%
%BeginExpansion
\mathbb{R}
%EndExpansion
^{+}$ and $\pi :M\times N\rightarrow M,$ $\sigma :M\times N\rightarrow N$ \
the canonical projections (see \cite{BO}). The warped product manifolds are
denoted by $\tilde{M}$ $=(M\times _{f}N,<,>).$ The function $f$ \ is called
the warping function of the warped product. If the warping function $f$ is $%
1 $, then $\tilde{M}=(M\times _{f}N,<,>)$ reduces Riemannian product
manifold. The manifolds $M$ and $N$ are called the base and the fiber of $%
\tilde{M}$, respectively. For a point $(p,q)\in M\times N,$ the tangent
space $T_{(p,q)}(M\times N)$ is isomorphic to the direct sum $%
T_{(p,q)}(M\times q)\oplus T_{(p,q)}(p\times N)\equiv T_{p}M\oplus T_{q}N.$
Let $\mathcal{L}_{\mathcal{H}}(M)$ (resp. $\mathcal{L}_{\mathcal{V}}(N)$) be
the set of all vector fields on $M\times N$ which is the horizontal lift
(resp. the vertical lift) of a vector field on $M$ (a vector field on $N$).
Thus a vector field on $M\times N$ can be written as $\bar{E}$ $=$ $\bar{X}+%
\bar{U}$, with $\bar{X}\in $ $\mathcal{L}_{\mathcal{H}}(M)$ and $\bar{U}$ $%
\in $ $\mathcal{L}_{\mathcal{V}}(N)$. \ One can see that 
\begin{equation*}
\pi _{\ast }(\mathcal{L}_{\mathcal{H}}(M))=\Gamma (TM)\text{ ,\ }\sigma
_{\ast }(\mathcal{L}_{\mathcal{V}}(N))=\Gamma (TN)
\end{equation*}%
and so $\pi _{\ast }(\bar{X})=X$ $\in $\ $\Gamma (TM)$ and $\sigma _{\ast }(%
\bar{U})=U\in \Gamma (TN)$. If $\bar{X},\bar{Y}\in \mathcal{L}_{\mathcal{H}%
}(M)$ then $[\bar{X},\bar{Y}]=\overset{-}{[X,Y]}\in \mathcal{L}_{\mathcal{H}%
}(B)$ and similary for $\mathcal{L}_{\mathcal{V}}(N)$ and also if $\bar{X}%
\in \mathcal{L}_{\mathcal{H}}(M),\bar{U}$ $\in $ $\mathcal{L}_{\mathcal{V}%
}(N)$ then $[\bar{X},\bar{U}]=0$ \cite{ONEILL}.

The Levi-Civita connection $\bar{\nabla}$ of $B\times _{f}F$ is related with
the Levi-Civita connections of $M$ and $N$ as follows:

\begin{proposition}[\protect \cite{ONEILL}]
\label{ON1}

For $\bar{X},\bar{Y}\in \mathcal{L}_{\mathcal{H}}(M)$ and $\bar{U},\bar{V}$ $%
\in $ $\mathcal{L}_{\mathcal{V}}(N)$,

(a) $\bar{\nabla}_{\bar{X}}\bar{Y}\in \mathcal{L}_{\mathcal{H}}(M)$ is the
lift of $^{M}\nabla _{X}Y$, that is $\pi _{\ast }(\bar{\nabla}_{\bar{X}}\bar{%
Y})=$ $^{M}\nabla _{X}Y$

(b) $\bar{\nabla}_{\bar{X}}\bar{U}=\bar{\nabla}_{\bar{U}}\bar{X}=\frac{X(f)}{%
f}U$

(c) $\bar{\nabla}_{\bar{U}}\bar{V}$ $=$ $\ ^{N}\nabla _{U}V-\frac{<U,V>}{f}%
\func{grad}f,$where $\sigma {}_{\ast }(\bar{\nabla}_{\bar{U}}\bar{V})=$ $%
^{N}\nabla _{U}V.$
\end{proposition}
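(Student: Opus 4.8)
The plan is to deduce all three formulas from the Koszul identity for the Levi-Civita connection $\bar{\nabla}$ of $(\tilde{M},<,>)$,
\begin{equation*}
2<\bar{\nabla}_{\bar{E}}\bar{F},\bar{G}>=\bar{E}<\bar{F},\bar{G}>+\bar{F}<\bar{E},\bar{G}>-\bar{G}<\bar{E},\bar{F}>+<[\bar{E},\bar{F}],\bar{G}>-<[\bar{E},\bar{G}],\bar{F}>-<[\bar{F},\bar{G}],\bar{E}>,
\end{equation*}
evaluating it, for each admissible choice of $\bar{E},\bar{F}$, against a horizontal test field $\bar{G}=\bar{Z}$ and against a vertical test field $\bar{G}=\bar{W}$ separately, and then reconstructing $\bar{\nabla}_{\bar{E}}\bar{F}$ from its horizontal and vertical parts via the orthogonal splitting $T\tilde{M}=\mathcal{L}_{\mathcal{H}}(M)\oplus\mathcal{L}_{\mathcal{V}}(N)$. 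The only inputs are the facts recalled above: the warped metric formula, the bracket relations $[\bar{X},\bar{Y}]=\overline{[X,Y]}$, $[\bar{U},\bar{V}]=\overline{[U,V]}$ (in the appropriate lifts) and $[\bar{X},\bar{U}]=0$, together with two elementary derivative rules, namely that a horizontal lift $\bar{X}$ annihilates $g_{N}(U,V)\circ\sigma$ and satisfies $\bar{X}(f\circ\pi)=(Xf)\circ\pi$, while a vertical lift $\bar{U}$ annihilates both $g_{M}(X,Y)\circ\pi$ and $f\circ\pi$.

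For (a), with $\bar{E}=\bar{X}$, $\bar{F}=\bar{Y}$ and $\bar{G}=\bar{Z}$ all horizontal, the right-hand side of Koszul is exactly the $\pi$-pullback of the Koszul identity that computes $2g_{M}(^{M}\nabla_{X}Y,Z)$, since $<\bar{X},\bar{Y}>=g_{M}(X,Y)\circ\pi$ and the three brackets occurring are themselves horizontal lifts; hence $<\bar{\nabla}_{\bar{X}}\bar{Y},\bar{Z}>=g_{M}(^{M}\nabla_{X}Y,Z)\circ\pi$. Testing instead against a vertical $\bar{W}$, every term vanishes: the pairings $<\bar{X},\bar{W}>$, $<\bar{Y},\bar{W}>$ are zero, $\bar{W}<\bar{X},\bar{Y}>=0$, the bracket $[\bar{X},\bar{Y}]$ is horizontal, and $[\bar{X},\bar{W}]=[\bar{Y},\bar{W}]=0$. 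So $\bar{\nabla}_{\bar{X}}\bar{Y}$ has no vertical part and is the lift of $^{M}\nabla_{X}Y$.

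For (b), torsion-freeness together with $[\bar{X},\bar{U}]=0$ gives $\bar{\nabla}_{\bar{X}}\bar{U}=\bar{\nabla}_{\bar{U}}\bar{X}$ at once; testing $\bar{\nabla}_{\bar{X}}\bar{U}$ against a horizontal $\bar{Z}$ kills all six Koszul terms, so the result is vertical, while testing against a vertical $\bar{V}$ leaves only $\bar{X}<\bar{U},\bar{V}>=\bar{X}\!\left((f\circ\pi)^{2}\right)g_{N}(U,V)=\tfrac{2(Xf)}{f}<\bar{U},\bar{V}>$, whence $\bar{\nabla}_{\bar{X}}\bar{U}=\tfrac{X(f)}{f}U$. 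For (c), testing $\bar{\nabla}_{\bar{U}}\bar{V}$ against a vertical $\bar{W}$ and using that $f\circ\pi$ is constant along all three vertical lifts makes every term equal to $(f\circ\pi)^{2}$ times the corresponding term of the Koszul identity for $g_{N}$, so the vertical part of $\bar{\nabla}_{\bar{U}}\bar{V}$ is the lift of $^{N}\nabla_{U}V$; testing against a horizontal $\bar{X}$ leaves only $-\bar{X}<\bar{U},\bar{V}>=-\tfrac{2(Xf)}{f}<\bar{U},\bar{V}>$, so $<\bar{\nabla}_{\bar{U}}\bar{V},\bar{X}>=-\tfrac{<\bar{U},\bar{V}>}{f}\,g_{M}(\func{grad}f,X)\circ\pi$ and the horizontal part is $-\tfrac{<U,V>}{f}\func{grad}f$. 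Adding the two parts gives (c).

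The computations are entirely routine, so there is no genuinely hard step; the only points that demand care are the product-structure bookkeeping — which factor of $(f\circ\pi)^{2}g_{N}(U,V)$ a given horizontal or vertical lift actually differentiates — and keeping precise track of which of the six Koszul terms survive in each mixed horizontal/vertical case, the one conceptual ingredient being that the $<,>$-orthogonal splitting of $T\tilde{M}$ is what allows $\bar{\nabla}_{\bar{E}}\bar{F}$ to be recovered from its pairings with horizontal and vertical fields.
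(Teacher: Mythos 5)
Your proof is correct. The paper offers no proof of this proposition at all --- it is quoted directly from O'Neill's book \cite{ONEILL} --- and your Koszul-formula derivation (splitting $T\tilde{M}$ orthogonally into horizontal and vertical lifts and testing each covariant derivative against both types of fields) is precisely the standard argument given in that reference, so there is nothing further to reconcile.
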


Here it is simplified the notation by writing $f$ for $f\circ \pi $ and $%
\func{grad}f$ for $\func{grad}(f\circ \pi )$.

Now, we consider the special warped product manifold%
\begin{equation*}
\tilde{M}=I\times _{f}N,\text{ }<,>=dt^{2}+f^{2}(t)g_{N},
\end{equation*}%
In practise, $(-)$ is ommited from lifts. In this case%
\begin{equation}
\tilde{\nabla}_{\partial _{t}}\partial _{t}=0,\tilde{\nabla}_{\partial
_{t}}X=\tilde{\nabla}_{X}\partial _{t}=\frac{f^{\prime }(t)}{f(t)}X\text{
and }\tilde{\nabla}_{X}Y=\ ^{N}\nabla _{X}Y-\frac{<X,Y>}{f(t)}f^{\prime
}(t)\partial _{t}.  \label{AZ}
\end{equation}

\section{Almost quadratic metric $\protect \phi $-structure}

Let $(N,g,J)$ be a metallic Riemannian manifold with metallic structure $J$.
By (\ref{M1}) and (\ref{M2}) we have 
\begin{equation}
g(JX,JY)=pg(X,JY)+qg(X,Y).  \label{COMPLEX2}
\end{equation}

Let us consider the warped product $\tilde{M}=%
%TCIMACRO{\U{211d} }%
%BeginExpansion
\mathbb{R}
%EndExpansion
\times _{f}N$, with warping function $f>0$, endowed with the Riemannian
metric%
\begin{equation*}
<,>=dt^{2}+f^{2}g.
\end{equation*}%
Now we will define an almost quadratic metric $\phi $-structure on $(\tilde{M%
},\tilde{g})$ by using similar method in \cite{ALPG}. Denote arbitrary any
vector field on $\tilde{M}$ by $\tilde{X}=\eta (\tilde{X})\xi +X,$ where $X$
is any vector field on $N$ and $dt=\eta $. By the help of tensor field $J$,
a new tensor field $\phi $ of type $(1,1)$ on $\tilde{M}$ can be given by 
\begin{equation}
\phi \tilde{X}=JX,\text{ \ }X\in \Gamma (TN),  \label{CM1}
\end{equation}%
for $\tilde{X}\in $ $\Gamma (T\tilde{M})$. So we get $\phi \xi =\phi (\xi
+0)=J0=0$ and $\eta (\phi \tilde{X})=0,$ for any vector field $\tilde{X}$ on 
$\tilde{M}$. Hence, we obtain 
\begin{equation}
\phi ^{2}\tilde{X}=p\phi \tilde{X}+q(\tilde{X}-\eta (\tilde{X})\xi )
\label{CONTA1}
\end{equation}%
and arrive at%
\begin{eqnarray*}
&<&\phi \tilde{X}\text{ },\tilde{Y}>=f^{2}g(JX,Y) \\
&=&f^{2}g(X,JY) \\
&=&<\tilde{X}\text{ },\phi \tilde{Y}>,
\end{eqnarray*}%
for $\tilde{X},\tilde{Y}\in \Gamma (T\tilde{M})$. Moreover, we get%
\begin{eqnarray*}
&<&\phi \tilde{X},\phi \tilde{Y}>=f^{2}g(JX,JY) \\
&=&f^{2}(pg(X,JY)+qg(X,Y)) \\
&=&p<\tilde{X}-\eta (\tilde{X})\xi ,\phi \tilde{Y}>+q(<\tilde{X},\tilde{Y}%
>-\eta (\tilde{X})\eta (\tilde{Y})) \\
&=&p<\tilde{X},\phi \tilde{Y}>+q(<\tilde{X},\tilde{Y}>-\eta (\tilde{X})\eta (%
\tilde{Y})).
\end{eqnarray*}%
Thus we have proved the following proposition.

\begin{proposition}
\label{QUADRATIC}If $(N,g,J)$ is a metallic Riemannian manifold, then there
is an almost quadratic metric $\phi $-structure on warped product manifold $(%
\tilde{M}=%
%TCIMACRO{\U{211d} }%
%BeginExpansion
\mathbb{R}
%EndExpansion
\times _{f}N,<,>=dt^{2}+f^{2}g)$.
\end{proposition}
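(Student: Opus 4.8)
The statement has essentially been established in the computations immediately preceding it; the plan is simply to organize those verifications into a clean argument that exhibits the quadruple $(\phi,\eta,\xi,<,>)$ and checks every axiom of an almost quadratic metric $\phi$-structure. First I would fix the data: on $\tilde M=\mathbb{R}\times_f N$ set $\xi=\partial_t$, $\eta=dt$, and define $\phi$ on an arbitrary $\tilde X=\eta(\tilde X)\xi+X$ (with $X\in\Gamma(TN)$) by $\phi\tilde X=JX$ as in (\ref{CM1}). I would note that this decomposition $\tilde X\mapsto(\eta(\tilde X),X)$ is well defined because $T_{(t,x)}\tilde M\cong T_t\mathbb{R}\oplus T_xN$, and that $\eta$ is a $1$-form, $\xi$ a unit vector field for $<,>=dt^2+f^2g$, so $\eta(\xi)=1$.

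Next I would verify the four structural identities in turn. The relation $\phi\xi=0$ is immediate since $\xi$ has zero $N$-component, and $\eta\circ\phi=0$ follows because $\phi\tilde X=JX$ lies in $\Gamma(TN)$, on which $\eta$ vanishes. For the quadratic relation I would compute $\phi^2\tilde X=\phi(JX)=J^2X=pJX+qX$ using (\ref{M1}), and then rewrite $JX=\phi\tilde X$ and $X=\tilde X-\eta(\tilde X)\xi$ to obtain $\phi^2\tilde X=p\phi\tilde X+q(\tilde X-\eta(\tilde X)\xi)$, which is exactly (\ref{CONTA1}); comparing with (\ref{DK1}) this gives $a=p$, $b=q$, and the nondegeneracy condition $a^2+4b=p^2+4q\neq 0$ holds because $p,q$ are nonzero natural numbers (indeed $p^2+4q>0$), so the metallic number $\sigma_{p,q}$ is well defined and the structure is genuinely quadratic. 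For metric compatibility (\ref{CR}) I would expand $<\phi\tilde X,\tilde Y>=f^2g(JX,Y)$, apply (\ref{M2}) to get $f^2g(X,JY)=<\tilde X,\phi\tilde Y>$; and for (\ref{MR}) I would expand $<\phi\tilde X,\phi\tilde Y>=f^2g(JX,JY)$, use (\ref{COMPLEX2}) to write this as $f^2(pg(X,JY)+qg(X,Y))$, and then re-express the terms via $f^2g(X,JY)=<\tilde X-\eta(\tilde X)\xi,\phi\tilde Y>=<\tilde X,\phi\tilde Y>$ (the correction term vanishing since $\phi\tilde Y$ is $\eta$-orthogonal) and $f^2g(X,Y)=<\tilde X,\tilde Y>-\eta(\tilde X)\eta(\tilde Y)$, arriving at $p<\tilde X,\phi\tilde Y>+q(<\tilde X,\tilde Y>-\eta(\tilde X)\eta(\tilde Y))$.

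Once all four identities are in place, $(\tilde g=<,>,\phi,\eta,\xi)$ satisfies exactly (\ref{DK1}), (\ref{CR}), (\ref{MR}), $\phi\xi=0$, so by definition it is an almost quadratic metric $\phi$-structure, which proves the proposition. I do not anticipate a genuine obstacle here: the only point requiring a word of care is that the $(1,1)$-tensor $\phi$ defined componentwise is smooth and $\mathbb{R}$-linear on $\Gamma(T\tilde M)$ — this follows since $J$ is a smooth tensor on $N$ and the splitting of $T\tilde M$ into horizontal and vertical parts is smooth — and that one must invoke $p^2+4q\neq 0$ to land in the quadratic (rather than degenerate) case of Debnath–Konar's definition. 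Everything else is the direct substitution already carried out above.
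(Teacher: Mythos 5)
Your proposal is correct and follows essentially the same route as the paper: defining $\phi\tilde X=JX$, $\xi=\partial_t$, $\eta=dt$ and verifying $\phi\xi=0$, the quadratic relation $\phi^2=p\phi+q(I-\eta\otimes\xi)$, the compatibility $<\phi\tilde X,\tilde Y>=<\tilde X,\phi\tilde Y>$, and the metric identity (\ref{MR}) by direct substitution using (\ref{M1}), (\ref{M2}) and (\ref{COMPLEX2}), exactly as in the computations the paper places before Proposition \ref{QUADRATIC}. Your explicit remarks on $a=p$, $b=q$, $p^{2}+4q\neq 0$ and on the smoothness of $\phi$ are a slightly more careful presentation of the same argument.
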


An almost quadratic metric $\phi $-manifold $(M,g,\nabla ,\phi ,\xi ,\eta )$
is called $(\beta ,\phi )$-Kenmotsu quadratic metric manifold if 
\begin{equation}
(\nabla _{X}\phi )Y=\beta \{g(X,\phi Y)\xi +\eta (Y)\phi X\},\beta \in
C^{\infty }(M).  \label{K1}
\end{equation}%
Taking $Y=\xi $ into (\ref{K1}) and using (\ref{DK1}), we obtain 
\begin{equation}
\nabla _{X}\xi =-\beta (X-\eta (X)\xi ).  \label{K2}
\end{equation}%
Moreover, by (\ref{K2}) we get $d\eta =0.$ If $\beta =0$, then this kind of
manifold is called cosymplectic quadratic manifold.

\begin{theorem}
\label{QC}If $(N,g,\nabla ,J)$ is a locally metallic Riemannian manifold,$\ $%
then $%
%TCIMACRO{\U{211d} }%
%BeginExpansion
\mathbb{R}
%EndExpansion
\times _{f}N$ is a $(-\frac{f^{\prime }}{f},\phi )$-Kenmotsu quadratic
metric manifold.
\end{theorem}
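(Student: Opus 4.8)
The plan is to use Proposition \ref{QUADRATIC} to equip $\tilde M = \mathbb{R}\times_f N$ with the almost quadratic metric $\phi$-structure $(\phi,\xi,\eta)$ defined in \eqref{CM1}, and then verify directly that the defining identity \eqref{K1} of a $(\beta,\phi)$-Kenmotsu quadratic metric manifold holds with $\beta = -f'/f$. Concretely, I would take arbitrary vector fields on $\tilde M$ in the split form $\tilde X = \eta(\tilde X)\xi + X$, $\tilde Y = \eta(\tilde Y)\xi + Y$ with $X,Y\in\Gamma(TN)$, and compute $(\tilde\nabla_{\tilde X}\phi)\tilde Y = \tilde\nabla_{\tilde X}(\phi\tilde Y) - \phi(\tilde\nabla_{\tilde X}\tilde Y)$ using the warped-product connection formulas \eqref{AZ} together with $\phi\tilde X = JX$, $\phi\xi = 0$.

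The computation breaks into the cases dictated by \eqref{AZ}: $\tilde\nabla_{\partial_t}\partial_t = 0$, $\tilde\nabla_{\partial_t}X = \tilde\nabla_X\partial_t = \tfrac{f'}{f}X$, and $\tilde\nabla_X Y = {}^N\nabla_X Y - \tfrac{\langle X,Y\rangle}{f}f'\partial_t$. First I would evaluate $(\tilde\nabla_X\phi)Y$ for $X,Y\in\Gamma(TN)$: since $\phi Y = JY\in\Gamma(TN)$, we get $\tilde\nabla_X(JY) = {}^N\nabla_X(JY) - \tfrac{\langle X,JY\rangle}{f}f'\partial_t$, while $\phi(\tilde\nabla_X Y) = \phi\big({}^N\nabla_X Y - \tfrac{\langle X,Y\rangle}{f}f'\partial_t\big) = J({}^N\nabla_X Y)$; here the crucial input is that $N$ is \emph{locally metallic}, i.e.\ ${}^N\nabla J = 0$, so ${}^N\nabla_X(JY) = J({}^N\nabla_X Y)$ and these two terms combine to leave only $-\tfrac{f'}{f}\langle X,JY\rangle\,\partial_t = -\tfrac{f'}{f}\langle X,\phi Y\rangle\xi$, which matches the right-hand side of \eqref{K1} with $\beta = -f'/f$ (noting $\eta(Y) = 0$ here). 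Next I would handle the mixed terms $(\tilde\nabla_{\partial_t}\phi)Y$ and $(\tilde\nabla_X\phi)\partial_t$: for the former, $\tilde\nabla_{\partial_t}(JY) = \tfrac{f'}{f}JY$ and $\phi(\tilde\nabla_{\partial_t}Y) = \phi(\tfrac{f'}{f}Y) = \tfrac{f'}{f}JY$, so $(\tilde\nabla_{\partial_t}\phi)Y = 0$, consistent with \eqref{K1} since both $g(\partial_t,\phi Y) = 0$ and $\eta(Y)=0$; for the latter, $\phi\partial_t = 0$ so $\tilde\nabla_X(\phi\partial_t) = 0$ while $\phi(\tilde\nabla_X\partial_t) = \phi(\tfrac{f'}{f}X) = \tfrac{f'}{f}JX = \tfrac{f'}{f}\phi X$, giving $(\tilde\nabla_X\phi)\partial_t = -\tfrac{f'}{f}\phi X = \beta\,\eta(\partial_t)\phi X$, again matching \eqref{K1}. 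Finally the $(\tilde\nabla_{\partial_t}\phi)\partial_t = 0$ case is immediate. Assembling these by bilinearity gives \eqref{K1} for all $\tilde X,\tilde Y$ with $\beta = -f'/f$.

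The only genuine subtlety — and the step I would flag as the crux — is the use of ${}^N\nabla J = 0$ to cancel the covariant derivatives of $J$ against each other in the purely horizontal (fiber) case; everything else is bookkeeping with \eqref{AZ}. I would also remark that, as noted after \eqref{K2}, the resulting structure automatically satisfies $\tilde\nabla_{\tilde X}\xi = \tfrac{f'}{f}(\tilde X - \eta(\tilde X)\xi)$, which one can read off directly from \eqref{AZ} since $\xi = \partial_t$; this is a useful consistency check on the sign of $\beta$. Hence $\mathbb{R}\times_f N$ is a $(-\tfrac{f'}{f},\phi)$-Kenmotsu quadratic metric manifold, as claimed.
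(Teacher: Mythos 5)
Your proposal is correct and follows essentially the same route as the paper: decompose vector fields on $\mathbb{R}\times_f N$ via $\tilde X=\eta(\tilde X)\xi+X$, apply the warped-product connection formulas (\ref{AZ}), and use $\nabla J=0$ to reduce $(\tilde\nabla_{\tilde X}\phi)\tilde Y$ to $-\frac{f'}{f}(\langle\tilde X,\phi\tilde Y\rangle\xi+\eta(\tilde Y)\phi\tilde X)$, exactly the identity (\ref{K1}) with $\beta=-\frac{f'}{f}$. The paper does the computation in one combined expansion rather than your four cases, but the content is identical, down to the same consistency observation $\tilde\nabla_{\tilde X}\xi=\frac{f'}{f}(\tilde X-\eta(\tilde X)\xi)$.
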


\begin{proof}
We consider $\tilde{X}=\eta (\tilde{X})\xi +X$ and $\tilde{Y}=\eta (\tilde{Y}%
)\xi +Y$ vector fields on $%
%TCIMACRO{\U{211d} }%
%BeginExpansion
\mathbb{R}
%EndExpansion
\times _{f}N$ , where $X,Y\in \Gamma (TN)$ and $\xi =\frac{\partial }{%
\partial t}$ $\in $ $\Gamma (%
%TCIMACRO{\U{211d} }%
%BeginExpansion
\mathbb{R}
%EndExpansion
)$. By help of (\ref{CM1}), we have 
\begin{eqnarray}
(\tilde{\nabla}_{\tilde{X}}\phi )\tilde{Y} &=&\tilde{\nabla}_{\tilde{X}}\phi 
\tilde{Y}-\phi \tilde{\nabla}_{\tilde{X}}\tilde{Y}  \notag \\
&=&\tilde{\nabla}_{X}JY+\eta (\tilde{X})\tilde{\nabla}_{\xi }JY-\phi (\tilde{%
\nabla}_{X}\tilde{Y}+\eta (\tilde{X})\tilde{\nabla}_{\xi }\tilde{Y})  \notag
\\
&=&\tilde{\nabla}_{X}JY+\eta (\tilde{X})\tilde{\nabla}_{\xi }JY-\phi (\tilde{%
\nabla}_{X}Y+X(\eta (\tilde{Y}))\xi +\eta (\tilde{Y})\tilde{\nabla}_{X}\xi
\label{W3} \\
&&+\eta (\tilde{X})\tilde{\nabla}_{\xi }Y+\xi (\eta (\tilde{Y}))\eta (\tilde{%
X})\xi ).  \notag
\end{eqnarray}%
Using (\ref{AZ}) in (\ref{W3}), we get 
\begin{eqnarray}
(\tilde{\nabla}_{\tilde{X}}\phi )\tilde{Y} &=&(\nabla _{X}J)Y-\frac{f}{f}%
^{\prime }<X,JY>\xi +\eta (\tilde{X})\frac{f^{\prime }}{f}JY-\phi (\eta (%
\tilde{Y})\frac{f^{\prime }}{f}X+\eta (\tilde{X})\frac{f^{\prime }}{f}Y)
\label{W4} \\
&=&(\nabla _{X}J)Y-\frac{f^{\prime }}{f}(<\tilde{X},\phi \tilde{Y}>\xi +\eta
(\tilde{Y})\phi \tilde{X}).  \notag
\end{eqnarray}%
Since $\nabla J=0,$ the last equation is reduced to 
\begin{equation}
(\tilde{\nabla}_{\tilde{X}}\phi )\tilde{Y}=-\frac{f^{\prime }}{f}(<\tilde{X}%
,\phi \tilde{Y}>\xi +\eta (\tilde{Y})\phi \tilde{X}).  \label{w5}
\end{equation}%
Using $\tilde{\nabla}_{X}\xi =\frac{f^{\prime }}{f}X$ , we have 
\begin{equation*}
\tilde{\nabla}_{\tilde{X}}\xi =\frac{f^{\prime }}{f}(\tilde{X}-\eta (\tilde{X%
})\xi ).
\end{equation*}%
So $%
%TCIMACRO{\U{211d} }%
%BeginExpansion
\mathbb{R}
%EndExpansion
\times _{f}N$ is a $(-\frac{f^{\prime }}{f},\phi )$-Kenmotsu quadratic
metric manifold.
\end{proof}

\begin{corollary}
Let $(N,g,\nabla ,J)$ be a locally metallic Riemannian manifold. Then
product manifold $%
%TCIMACRO{\U{211d} }%
%BeginExpansion
\mathbb{R}
%EndExpansion
\times N$ is a cosymplectic quadratic metric manifold.

\begin{example}
A.M. Blaga and C.E. Hretcanu \cite{BH} constructed a metallic structure on $%
%TCIMACRO{\U{211d} }%
%BeginExpansion
\mathbb{R}
%EndExpansion
^{n+m}$ following manner:%
\begin{equation*}
J(x_{1},...,x_{n},y_{1},...,y_{m})=(\sigma x_{1},...,\sigma x_{n},\bar{\sigma%
}y_{1},...,\bar{\sigma}y_{m}),
\end{equation*}%
where $\sigma =\sigma _{p,q}=\frac{p+\sqrt{p^{2}+4pq}}{2}$ and $\bar{\sigma}=%
\bar{\sigma}_{p,q}=\frac{p-\sqrt{p^{2}+4pq}}{2}$ for $p,q$ positive
integers. By Theorem \ref{QC} $H^{n+m+1}=%
%TCIMACRO{\U{211d} }%
%BeginExpansion
\mathbb{R}
%EndExpansion
\times _{e^{t}}%
%TCIMACRO{\U{211d} }%
%BeginExpansion
\mathbb{R}
%EndExpansion
^{n+m}$ is a $(-1,\phi )$-Kenmotsu quadratic metric manifold.
\end{example}
\end{corollary}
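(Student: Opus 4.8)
The plan is to derive this as a direct specialization of Theorem~\ref{QC}. The key observation is that the product manifold $\mathbb{R}\times N$ is nothing but the warped product $\mathbb{R}\times_{f}N$ with constant warping function $f\equiv 1$, equipped with the product metric $dt^{2}+g$. First I would note that the metallic structure $J$ on $N$ induces, via the construction in Proposition~\ref{QUADRATIC}, an almost quadratic metric $\phi$-structure $(\phi,\eta,\xi)$ on $\mathbb{R}\times N$, exactly as in the warped case; the formulas $\phi\tilde{X}=JX$, $\phi\xi=0$, $\eta(\phi\tilde{X})=0$, and the compatibility relations (\ref{CONTA1}) and the metric identities hold verbatim since they never used $f\neq 1$.

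Next, since $N$ is locally metallic, $\nabla J=0$, so Theorem~\ref{QC} applies and tells us that $\mathbb{R}\times_{f}N$ is a $(-\tfrac{f'}{f},\phi)$-Kenmotsu quadratic metric manifold for any positive warping function $f$. Taking $f\equiv 1$ gives $f'=0$, hence $\beta=-\tfrac{f'}{f}=0$. By the definition immediately preceding Theorem~\ref{QC}, a $(\beta,\phi)$-Kenmotsu quadratic metric manifold with $\beta=0$ is precisely a cosymplectic quadratic metric manifold: equation (\ref{K1}) reduces to $(\nabla_{X}\phi)Y=0$ and (\ref{K2}) reduces to $\nabla_{X}\xi=0$. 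Therefore $\mathbb{R}\times N$ carries a cosymplectic quadratic metric structure, as claimed.

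The only point requiring a little care is to confirm that the Levi-Civita connection of the Riemannian product $\mathbb{R}\times N$ is genuinely the $f\equiv 1$ limit of the warped-product connection, i.e. that (\ref{AZ}) degenerates correctly: $\tilde{\nabla}_{\partial_t}\partial_t=0$, $\tilde{\nabla}_{\partial_t}X=\tilde{\nabla}_{X}\partial_t=0$, and $\tilde{\nabla}_{X}Y={}^{N}\nabla_{X}Y$. This is immediate from Proposition~\ref{ON1} since $X(f)=0$ and $\operatorname{grad}f=0$, so the computation in the proof of Theorem~\ref{QC} carries through with every $\tfrac{f'}{f}$ term vanishing. I do not anticipate any real obstacle here; the corollary is essentially a one-line consequence once the $f\equiv 1$ substitution is made, and the proof can simply read: ``Apply Theorem~\ref{QC} with $f\equiv 1$; then $\beta=-\tfrac{f'}{f}=0$, and a $(0,\phi)$-Kenmotsu quadratic metric manifold is by definition cosymplectic.''
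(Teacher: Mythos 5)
Your proof is correct and follows exactly the route the paper intends: the corollary is stated as an immediate specialization of Theorem~\ref{QC} to the constant warping function $f\equiv 1$, for which $\beta=-\frac{f'}{f}=0$, and a $(0,\phi)$-Kenmotsu quadratic metric manifold is by definition cosymplectic. Your extra verification that the formulas (\ref{AZ}) degenerate to the product connection is a harmless (and sound) addition; no gap here.
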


$M$ is said to be metallic shaped hypersurface in a space form $N^{n+1}(c)$
if the shape operator $A$ of $M$ is a metallic structure (see \cite{OZGUR}).

\begin{example}
In \cite{OZGUR},C. \"{O}zg\"{u}r and N.Y\i lmaz \"{O}zg\"{u}r proved that $%
S^{n}(\frac{2}{p+\sqrt{p^{2}+4pq}})$ sphere is a locally metallic shaped
hypersurfaces in $%
%TCIMACRO{\U{211d} }%
%BeginExpansion
\mathbb{R}
%EndExpansion
^{n+1}$. Using Theorem \ref{QC}, we have 
\begin{equation*}
H^{n+1}=%
%TCIMACRO{\U{211d} }%
%BeginExpansion
\mathbb{R}
%EndExpansion
\times _{\cosh (t)}S^{n}(\frac{2}{p+\sqrt{p^{2}+4q)}})
\end{equation*}
$(-\tanh t,\phi )$-Kenmotsu quadratic metric manifold.
\end{example}

\begin{example}
P. Debnath and A. Konar \cite{DK} gave an example of almost\ quadratic $\phi 
$-structure on $%
%TCIMACRO{\U{211d} }%
%BeginExpansion
\mathbb{R}
%EndExpansion
^{4}$ as following:

If the $(1,1)$ tensor field $\phi ,$ 1-form $\eta $ and vector field $\xi $
defined as%
\begin{equation*}
\phi =%
\begin{bmatrix}
2 & 1 & 0 & 0 \\ 
9 & 2 & 0 & 0 \\ 
0 & 0 & 5 & 0 \\ 
0 & 0 & 0 & 0%
\end{bmatrix}%
,\eta =%
\begin{bmatrix}
0 & 0 & 0 & 1%
\end{bmatrix}%
,\xi =%
\begin{bmatrix}
0 \\ 
0 \\ 
0 \\ 
0%
\end{bmatrix}%
,
\end{equation*}%
then 
\begin{equation*}
\phi ^{2}=4\phi +5(I_{4}-\eta \otimes \xi ).
\end{equation*}%
Thus $%
%TCIMACRO{\U{211d} }%
%BeginExpansion
\mathbb{R}
%EndExpansion
^{4}$ has an almost\ quadratic $\phi $-structure.
\end{example}

\begin{theorem}
Every differentiable manifold $M$ endowed with an almost quadratic $\phi $%
-structure $(\phi ,\eta ,\xi )$ admits associated Riemannian metric.

\begin{proof}
Let $\tilde{h}$ be any Riemannian metric. Putting 
\begin{equation*}
h(X,Y)=\tilde{h}(\phi ^{2}X,\phi ^{2}Y)+\eta (X)\eta (Y),
\end{equation*}%
we have $\eta (X)=h(X,\xi ).$ We now define $g$ by 
\begin{equation*}
g(X,Y)=\frac{1}{\alpha +\delta }[\alpha h(X,Y)+\beta h(\phi X,\phi Y)+\frac{%
\gamma }{2}(h(\phi X,Y)+h(X,\phi Y))+\delta \eta (X)\eta (Y)]\text{,}
\end{equation*}%
where $\alpha ,\beta ,\gamma ,\delta ,q$ are non zero constants satisfying $%
\beta q=p\frac{\gamma }{2}+\alpha ,$ $\alpha +\delta \neq 0.$ It is clearly
seen that%
\begin{equation*}
g(\phi X,\phi Y)=pg(\phi X,Y)+q(g(X,Y)-\eta (X)\eta (Y))
\end{equation*}%
for any $X,Y\in \Gamma (TM).$
\end{proof}
\end{theorem}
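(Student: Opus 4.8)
The plan is to start from an arbitrary Riemannian metric $\tilde h$ on $M$ and carry out a two-step averaging construction that forces the compatibility identity (\ref{MR}) to hold. The first step addresses the fact that $\phi$ has a kernel spanned by $\xi$: by setting $h(X,Y)=\tilde h(\phi^{2}X,\phi^{2}Y)+\eta(X)\eta(Y)$ we obtain a Riemannian metric for which $\xi$ is a unit vector orthogonal to the image of $\phi^{2}$, and in particular $\eta(X)=h(X,\xi)$. One must check that $h$ is positive definite: on the distribution complementary to $\xi$ the operator $\phi$ (hence $\phi^{2}$) is invertible because $a^{2}+4b\neq0$, so $\tilde h(\phi^{2}X,\phi^{2}X)>0$ for $X\neq0$ in that distribution, while the $\eta\otimes\eta$ term handles the $\xi$-direction; a short argument splitting an arbitrary $X$ along $\xi$ and its $h$-complement finishes this. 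I would also record that $h(\phi X,Y)$ is automatically symmetric in $X,Y$, since $\phi$ commutes with $\phi^{2}$ and $\tilde h(\phi^{2}\cdot,\phi^{2}\cdot)$ is symmetric while $\eta\circ\phi=0$ kills the second term — this symmetry is what makes the final formula well posed.

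The second step is the true averaging: define
\begin{equation*}
g(X,Y)=\frac{1}{\alpha+\delta}\Bigl[\alpha h(X,Y)+\beta h(\phi X,\phi Y)+\tfrac{\gamma}{2}\bigl(h(\phi X,Y)+h(X,\phi Y)\bigr)+\delta\,\eta(X)\eta(Y)\Bigr],
\end{equation*}
and verify (\ref{MR}) directly by substituting and using the quadratic relation (\ref{DK1}) in the form $\phi^{2}=a\phi+b(I-\eta\otimes\xi)$ together with $\phi\xi=0$ and $\eta\circ\phi=0$. Concretely I would compute $g(\phi X,\phi Y)$ by pushing one $\phi$ through each slot: the term $\alpha h(\phi X,\phi Y)$ stays, $\beta h(\phi^{2}X,\phi^{2}Y)$ is rewritten with (\ref{DK1}), the cross terms produce $h(\phi^{2}X,\phi Y)$ and $h(\phi X,\phi^{2}Y)$ which again reduce via (\ref{DK1}), and the $\eta$-term vanishes because $\eta(\phi X)=0$. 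Collecting coefficients of the four "basis" bilinear forms $h(X,Y)$, $h(\phi X,\phi Y)$, $h(\phi X,Y)+h(X,\phi Y)$, and $\eta(X)\eta(Y)$, and comparing with the required right-hand side $p\,g(\phi X,Y)+q\bigl(g(X,Y)-\eta(X)\eta(Y)\bigr)$, yields exactly one scalar constraint, namely $\beta q=p\tfrac{\gamma}{2}+\alpha$, which is precisely the hypothesis imposed on the constants; the other coefficient comparisons are identities. One must also choose $\alpha,\beta,\gamma,\delta$ so that $g$ remains positive definite (e.g. $\alpha,\beta,\delta>0$ and $|\gamma|$ small relative to $\alpha,\beta$, using that $h(\phi X,\phi X)\ge0$ and Cauchy–Schwarz), and check $\alpha+\delta\neq0$ so the normalization makes sense; the normalization constant $\tfrac{1}{\alpha+\delta}$ is cosmetic and ensures $g(\xi,\xi)=1$.

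The bookkeeping in the coefficient comparison is the only place where care is needed, but it is routine linear algebra once the reductions via (\ref{DK1}) are in place; the main conceptual obstacle is really the first step — producing a metric in which $\xi$ behaves correctly and $\phi$ is self-adjoint — since without the correct ground metric $h$ the averaging in step two would not close up. An alternative, slightly cleaner route is to work eigenspace-wise using the decomposition $TM=\Pi_{p}\oplus\Pi_{q}\oplus\Pi_{1}$ from the cited theorem of Debnath–Konar: choose $h$ to make these three distributions mutually orthogonal, and then (\ref{MR}) becomes a scalar identity on each eigenspace relating the eigenvalue $\tfrac{a\pm\sqrt{a^{2}+4b}}{2}$ to $p=a$, $q=b$; but the averaging formula given in the statement is more self-contained and is the one I would present.
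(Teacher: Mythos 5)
Your construction is exactly the paper's: the same auxiliary metric $h(X,Y)=\tilde h(\phi^{2}X,\phi^{2}Y)+\eta(X)\eta(Y)$, the same averaged metric $g$, and the same single constraint $\beta q=p\frac{\gamma}{2}+\alpha$, so in substance you reproduce the paper's proof while supplying the verification the paper leaves implicit (positive definiteness, which the paper never addresses, and the coefficient bookkeeping). One side-claim in your write-up is, however, false: $h(\phi X,Y)$ is \emph{not} automatically symmetric in $X,Y$, since $h(\phi X,Y)=\tilde h(\phi^{3}X,\phi^{2}Y)$ while $h(X,\phi Y)=\tilde h(\phi^{2}X,\phi^{3}Y)$, and these differ unless $\phi$ is self-adjoint for the arbitrary starting metric $\tilde h$. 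Fortunately this symmetry is not needed: the formula for $g$ explicitly symmetrizes the cross term, and when one expands $g(\phi X,\phi Y)-pg(\phi X,Y)-q\bigl(g(X,Y)-\eta(X)\eta(Y)\bigr)$ the coefficients of the four forms $\tilde h(\phi^{3}X,\phi^{3}Y)$, $\tilde h(\phi^{3}X,\phi^{2}Y)$, $\tilde h(\phi^{2}X,\phi^{3}Y)$, $\tilde h(\phi^{2}X,\phi^{2}Y)$ must be matched \emph{separately}; the $\tilde h(\phi^{2}X,\phi^{3}Y)$ and $\tilde h(\phi^{2}X,\phi^{2}Y)$ coefficients match identically, while the other two each reduce to the same relation $\beta q=p\frac{\gamma}{2}+\alpha$ (one of them multiplied by $p$), so your conclusion that exactly one scalar constraint arises is still correct — just delete the incorrect symmetry remark and do the comparison with the two cross terms kept separate. (The same computation also gives $g(\phi X,Y)=g(X,\phi Y)$ under the same constraint, i.e.\ the compatibility (\ref{CR}) as well as (\ref{MR}).)
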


\begin{remark}
If we choose $\alpha =\delta =q,\beta =\gamma =1$, then we have $p=0.$ In
this case, we obtain Theorem 4.1 of \cite{DK}.
\end{remark}

\begin{proposition}
Let $(M,g,\nabla ,\phi ,\xi ,\eta )$ be a $(\beta ,\phi )$-Kenmotsu
quadratic metric manifold. Then quadratic structure $\phi $ is integrable,
that is, the Nijenhuis tensor $N_{\phi }\equiv 0.$

\begin{proof}
Using (\ref{CONTA1})in (\ref{M4}), we have%
\begin{eqnarray}
N_{\phi }(X,Y) &=&\phi ^{2}[X,Y]+[\phi X,\phi Y]-\phi \lbrack \phi X,Y]-\phi
\lbrack X,\phi Y]  \notag \\
&=&p\phi \lbrack X,Y]+q([X,Y]-\eta ([X,Y])\xi )+\tilde{\nabla}_{\phi X}\phi Y
\notag \\
&&-\nabla _{\phi Y}\phi X-\phi (\nabla _{\phi X}Y-\nabla _{Y}\phi X)-\phi
(\nabla _{X}\phi Y-\nabla _{\phi Y}X)  \notag \\
&=&p\phi \nabla _{X}Y-p\phi \nabla _{Y}X+q\nabla _{X}Y-q\nabla _{Y}X-q\eta
([X,Y])\xi )  \notag \\
&&+(\nabla _{\phi X}\phi )Y-(\nabla _{\phi Y}\phi )X+\phi \nabla _{Y}\phi
X-\phi \nabla _{X}\phi Y.  \label{n1}
\end{eqnarray}%
for $X,Y\in \Gamma (TM).$In order to prove this theorem, we have to show that%
\begin{eqnarray}
p\phi \nabla _{X}Y-\phi \nabla _{X}\phi Y &=&p\phi \nabla _{X}Y+(\nabla
_{X}\phi )\phi Y-\nabla _{X}\phi ^{2}Y  \notag \\
&&\overset{(\ref{CONTA1})}{=}-p(\nabla _{X}\phi )Y+(\nabla _{X}\phi )\phi
Y-q\nabla _{X}Y  \notag \\
&&+qX(\eta (Y))\xi +q(\eta (Y))\nabla _{X}\xi .  \label{n2}
\end{eqnarray}%
If we write the last equation in (\ref{n1}), we get%
\begin{eqnarray}
N_{\phi }(X,Y) &=&-p(\nabla _{X}\phi )Y+p(\nabla _{Y}\phi )X+(\nabla
_{X}\phi )\phi Y-(\nabla _{Y}\phi )\phi X  \notag \\
&&+(\nabla _{\phi X}\phi )Y-(\nabla _{\phi Y}\phi )X+q(X\eta (Y)\xi -Y\eta
(X)\xi -\eta ([X,Y])\xi )  \notag \\
&&+q(\eta (Y)\nabla _{X}\xi -\eta (X)\nabla _{Y}\xi ).  \label{n3}
\end{eqnarray}%
Employing (\ref{w5}) and (\ref{CONTA1}) in (\ref{n3}), we deduce that%
\begin{eqnarray*}
N_{\phi }(X,Y) &=&q(X\eta (Y)\xi -Y\eta (X)\xi -\eta ([X,Y])\xi ) \\
&=&0.
\end{eqnarray*}%
This completes the proof of the theorem.
\end{proof}
\end{proposition}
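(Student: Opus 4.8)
The plan is to compute the Nijenhuis tensor directly from its definition (\ref{M4}), using the defining quadratic identity (\ref{CONTA1}) to eliminate the $\phi^2$ terms, and then invoking the $(\beta,\phi)$-Kenmotsu condition (\ref{K1}) together with its consequence (\ref{K2}) for $\nabla_X\xi$ to kill everything that remains. First I would expand $N_\phi(X,Y)=\phi^2[X,Y]+[\phi X,\phi Y]-\phi[\phi X,Y]-\phi[X,\phi Y]$ by writing each bracket as $\nabla_{\cdot}\cdot-\nabla_{\cdot}\cdot$ with the Levi-Civita connection (torsion-free), and substituting $\phi^2 Z=p\phi Z+q(Z-\eta(Z)\xi)$ for $Z=[X,Y]$. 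This produces a sum of terms of three types: terms of the form $p\phi\nabla Z$ or $q\nabla Z$, terms involving $\phi\nabla_\bullet\phi\bullet$, and the $q\eta([X,Y])\xi$ term.

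The key algebraic maneuver is the identity displayed as (\ref{n2}): regroup $p\phi\nabla_X Y-\phi\nabla_X\phi Y$ by adding and subtracting $\nabla_X\phi^2 Y$, so that $-\phi\nabla_X\phi Y=(\nabla_X\phi)\phi Y-\nabla_X\phi^2 Y$, and then replace $\nabla_X\phi^2 Y$ using (\ref{CONTA1}) — this converts the $p\phi\nabla_X Y$ pieces into covariant-derivative-of-$\phi$ pieces plus lower-order $q$-terms $qX(\eta Y)\xi+q\eta(Y)\nabla_X\xi$. Carrying this out for both the $X$-derivative and the (antisymmetric) $Y$-derivative, one arrives at the formula (\ref{n3}), in which $N_\phi(X,Y)$ is expressed purely in terms of the six covariant-derivative terms $(\nabla_X\phi)Y$, $(\nabla_X\phi)\phi Y$, $(\nabla_{\phi X}\phi)Y$ and their $X\leftrightarrow Y$ swaps, plus the purely $\eta$-dependent remainder $q(X\eta(Y)\xi-Y\eta(X)\xi-\eta([X,Y])\xi)+q(\eta(Y)\nabla_X\xi-\eta(X)\nabla_Y\xi)$.

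At this stage I would substitute the $(\beta,\phi)$-Kenmotsu relation (\ref{w5}), i.e.\ $(\nabla_X\phi)Y=-\tfrac{f'}{f}(\langle X,\phi Y\rangle\xi+\eta(Y)\phi X)$ in the general form $(\nabla_X\phi)Y=\beta(g(X,\phi Y)\xi+\eta(Y)\phi X)$, into each of the six structure-derivative terms. Because $\phi\xi=0$, $\eta\circ\phi=0$ and $g$ is $\phi$-compatible, the terms $(\nabla_X\phi)\phi Y$ and $(\nabla_{\phi X}\phi)Y$ simplify dramatically: $g(X,\phi(\phi Y))$ and $\eta(\phi Y)$ reduce via (\ref{CONTA1}) and $\eta\circ\phi=0$, and one checks that the six terms cancel in antisymmetric pairs. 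Then I would use (\ref{K2}), $\nabla_X\xi=-\beta(X-\eta(X)\xi)$, so that $\eta(Y)\nabla_X\xi-\eta(X)\nabla_Y\xi=-\beta(\eta(Y)X-\eta(X)Y)$, while the remaining scalar terms collapse by the Leibniz rule: $X\eta(Y)-Y\eta(X)-\eta([X,Y])=(d\eta)(X,Y)=0$ since $d\eta=0$ on such a manifold. What is left after all cancellations is $N_\phi(X,Y)=q\big(X\eta(Y)\xi-Y\eta(X)\xi-\eta([X,Y])\xi\big)=0$.

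The main obstacle is purely bookkeeping: keeping track of signs and of which terms are genuinely antisymmetric in $X,Y$ while substituting (\ref{CONTA1}) and (\ref{w5}) into a six-term expression, and making sure the auxiliary identity (\ref{n2}) is applied consistently to both slots. There is no conceptual difficulty — every ingredient (the quadratic relation, the Kenmotsu condition, $d\eta=0$, $\phi\xi=0$, $\eta\circ\phi=0$, $\phi$-compatibility of $g$) is already available from the excerpt — but one must be careful that the $\beta$ appearing in (\ref{w5}) is a function, not a constant, so it passes freely through the cancellations without generating derivative-of-$\beta$ terms (it does, because each term it multiplies is antisymmetrized away before any differentiation of $\beta$ could occur).
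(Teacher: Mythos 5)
Your route is the same as the paper's (expand $N_{\phi}$ via the Levi--Civita connection, absorb $\phi^{2}$ with (\ref{CONTA1}), regroup through the identity (\ref{n2}) to reach (\ref{n3}), then feed in the Kenmotsu condition), and your final conclusion is right; but the cancellation pattern you describe at the last stage is not the one that actually occurs, and if carried out as written the proof would not close. Concretely: substituting $(\nabla_{X}\phi)Y=\beta\{g(X,\phi Y)\xi+\eta(Y)\phi X\}$ into the six derivative terms of (\ref{n3}), the pair $(\nabla_{X}\phi)\phi Y-(\nabla_{Y}\phi)\phi X$ does vanish (by $\eta\circ\phi=0$ and the symmetry $g(X,\phi^{2}Y)=g(\phi^{2}X,Y)$), but the other two pairs do \emph{not} cancel among themselves: the first pair leaves $p\beta(\eta(X)\phi Y-\eta(Y)\phi X)$, and the third pair, after applying (\ref{CONTA1}) to $\phi^{2}$, leaves $p\beta(\eta(Y)\phi X-\eta(X)\phi Y)+q\beta(\eta(Y)X-\eta(X)Y)$, so the six terms sum to the nonzero residue $q\beta(\eta(Y)X-\eta(X)Y)$. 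That residue is precisely what is needed to kill the contribution you compute from (\ref{K2}), namely $q(\eta(Y)\nabla_{X}\xi-\eta(X)\nabla_{Y}\xi)=-q\beta(\eta(Y)X-\eta(X)Y)$. In your accounting you assert that the six terms ``cancel in antisymmetric pairs'' \emph{and} separately record the $-q\beta(\eta(Y)X-\eta(X)Y)$ term from (\ref{K2}); with those two claims together, what is left is not just $q\big(X\eta(Y)-Y\eta(X)-\eta([X,Y])\big)\xi$ but also an uncancelled tangential term $-q\beta(\eta(Y)X-\eta(X)Y)$, which vanishes only when $\beta=0$. The fix is exactly the bookkeeping above: the derivative-term residue and the $\nabla\xi$-term cancel each other, and the remaining $\xi$-component vanishes because $d\eta=0$ (a consequence of (\ref{K2}), as noted in the paper). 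Everything else in your outline --- including the observation that $\beta$ is a function but is never differentiated, and the use of the general $\beta$ form of (\ref{w5}) rather than the warped-product instance --- is sound and matches the paper's argument.
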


\section{Quadratic metric $\protect \phi $-hypersurfaces of metallic
Rieamannian manifolds}

\begin{theorem}
Let $\tilde{M}^{n+1}$ be a differentiable manifold with metallic structure $%
J $ and $M^{n}$ be a hypersurface of $\tilde{M}^{n+1}.$Then there is an
almost quadratic $\phi $-structure $(\phi ,\eta ,\xi )$ on $M^{n}.$

\begin{proof}
Denote by $\nu $ a unit normal vector field of $M^{n}.$For any vector field $%
X$ tangent to $M^{n}$, we put 
\begin{eqnarray}
JX &=&\phi X+\eta (X)\nu ,\text{ \  \ }  \label{1} \\
J\nu &=&q\xi +p\nu  \label{2} \\
J\xi &=&\nu ,  \label{3}
\end{eqnarray}%
where $\phi $ is an $(1,1)$ tensor field on $M^{n}$, $\xi $ $\in \Gamma (TM)$
and $\eta $ is a 1-form such that $\eta (\xi )=1$ and $\eta \circ \phi =0.$
On applying the operator $J$ on the above equality (\ref{1}) and using (\ref%
{2}) we have%
\begin{eqnarray}
J^{2}X &=&J(\phi X)+\eta (X)J\nu  \notag \\
&=&\phi ^{2}X+\eta (X)(q\xi +p\nu ).  \label{4}
\end{eqnarray}%
Using (\ref{M1}) in (\ref{4}) 
\begin{equation*}
p\phi X+p\eta (X)\nu +qX=\phi ^{2}X+\eta (X)(q\xi +p\nu ).
\end{equation*}%
Hence, we are led to the conclusion 
\begin{equation}
\phi ^{2}X=p\phi X+q(X-\eta (X)\xi ).  \label{5}
\end{equation}
\end{proof}
\end{theorem}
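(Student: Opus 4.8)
The plan is to construct the triple $(\phi,\eta,\xi)$ directly from the ambient metallic structure $J$ by decomposing $J$ into its tangential and normal parts along the hypersurface $M^n$, exactly as in equations~(\ref{1})--(\ref{3}), and then verify the defining relations of an almost quadratic $\phi$-structure, namely $\phi\xi=0$, $\eta\circ\phi=0$, $\eta(\xi)=1$, and $\phi^2=p\phi+q(I-\eta\otimes\xi)$ with $p^2+4q\neq0$. First I would fix a unit normal $\nu$ and, for $X\in\Gamma(TM^n)$, define $\phi X$ and $\eta(X)$ as the tangential and normal components of $JX$, so that $JX=\phi X+\eta(X)\nu$; then I would define $\xi$ as the tangential part of $J\nu$, appropriately normalised, which forces the relations $J\nu=q\xi+p\nu$ and $J\xi=\nu$ after using $J^2=pJ+qI$ on $\nu$ together with the fact that $\phi$ is self-adjoint (hence $\eta(X)=g(JX,\nu)=g(X,J\nu)$ picks out $\xi$ correctly).

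The key computation is then to apply $J$ to the decomposition $JX=\phi X+\eta(X)\nu$: using (\ref{2}) this yields $J^2X=\phi^2 X+\eta(X)(q\xi+p\nu)$, while using (\ref{M1}) directly gives $J^2X=pJX+qX=p\phi X+p\eta(X)\nu+qX$. Equating the tangential parts of these two expressions produces $\phi^2 X=p\phi X+qX-q\eta(X)\xi$, which is precisely (\ref{5}), i.e. $\phi^2=p\phi+q(I-\eta\otimes\xi)$. The normal components automatically match ($p\eta(X)\nu=p\eta(X)\nu$), and the conditions $\eta(\xi)=1$, $\eta\circ\phi=0$, $\phi\xi=0$ follow by taking $X=\xi$, by comparing normal parts, and by consistency of the decomposition respectively; finally $p^2+4q\neq0$ holds because $p,q$ are positive integers (so $p^2+4q>0$), which is exactly the nondegeneracy hypothesis $a^2+4b\neq0$ in (\ref{DK1}) with $a=p$, $b=q$.

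The only genuinely delicate point is checking that $\xi$ is well-defined and tangent to $M^n$ with $\eta(\xi)=1$ and $J\xi=\nu$: this requires knowing that $J\nu$ has a nonzero tangential component, which one sees from $g(J\nu,\nu)=g(\nu,J\nu)$ and $J^2\nu=pJ\nu+q\nu$ — if $J\nu=p\nu$ were purely normal then $p^2\nu=p^2\nu+q\nu$ would force $q=0$, contradicting $q$ being a positive integer. Hence I would expect this consistency check (essentially that $J$ does not leave $\nu$ invariant) to be the main obstacle, but it is handled cleanly by the compatibility of $g$ with $J$ and the quadratic relation. The remaining verifications are routine component comparisons, so I would present them briefly and refer to (\ref{M1}), (\ref{1}), (\ref{2}), (\ref{3}) as the author does, concluding that $(\phi,\eta,\xi)$ satisfies (\ref{DK1}) and is therefore an almost quadratic $\phi$-structure on $M^n$.
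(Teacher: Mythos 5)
Your core computation is the same as the paper's: decompose $J$ along $M^{n}$ as in (\ref{1})--(\ref{3}), apply $J$ to (\ref{1}), invoke (\ref{M1}), and compare tangential components to obtain (\ref{5}); that part matches the paper essentially line for line. The genuine problem lies in the extra step you yourself single out as the ``delicate point'', namely the claim that (\ref{2}) and (\ref{3}) are \emph{forced} once $\xi$ is taken to be the normalised tangential part of $J\nu$. Write $J\nu=U+\lambda \nu$ with $U$ tangent to $M^{n}$. Nothing in the hypotheses gives $\lambda=p$: your contradiction argument only rules out the single case $U=0$, $\lambda=p$, whereas a purely normal $J\nu$ with $\lambda=\sigma_{p,q}$ or $\lambda=\bar{\sigma}_{p,q}$ satisfies $\lambda^{2}=p\lambda+q$ and is entirely consistent with (\ref{M1}). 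Concretely, for the Blaga--Hretcanu structure on $\mathbb{R}^{n+m}$ take a coordinate hyperplane whose unit normal $\nu$ is an eigenvector of $J$; then $J\nu$ has zero tangential part, no tangent vector $\xi$ with $J\xi=\nu$ exists, and your construction of $\xi$ collapses. Moreover, even when $U\neq 0$, neither $\lambda=p$ nor $J\xi=\nu$ follows from the decomposition alone; your argument presupposes the form $J\nu=q\xi+p\nu$ that it is supposed to establish. A smaller point: you also use $g(JX,\nu)=g(X,J\nu)$, i.e.\ compatibility of $J$ with the ambient metric, which is not among the hypotheses of the theorem (only a metallic structure on a differentiable manifold is assumed; the metric enters only through the choice of unit normal).

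For comparison, the paper does not attempt to derive (\ref{2}) and (\ref{3}) at all: it simply \emph{puts} (\ref{1})--(\ref{3}), together with $\eta(\xi)=1$ and $\eta\circ\phi=0$, as the definition of the induced data --- in effect a tacit hypothesis on how $J$ interacts with the normal direction (compare the general induced structure of Hretcanu--Crasmareanu cited as \cite{H3}) --- and then performs exactly the computation you give to reach (\ref{5}). So where the paper argues, your proposal agrees with it; the gap is confined to the additional justification you add, which does not actually close the existence question for $\xi$ and would fail on hypersurfaces whose normal is $J$-invariant.
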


Let $M^{n}$ be a hypersurface of a $n+1$-dimensional metallic Riemannian
manifold $\tilde{M}^{n+1}$ and let $\nu $ be a globally unit normal vector
field on $M^{n}$. Denote $\tilde{\nabla}$ the Levi-Civita connection with
respect to the Riemannian metric $\tilde{g}$ of $\tilde{M}^{n+1}$. Then the
Gauss and Weingarten formulas are given respectively by%
\begin{equation}
\tilde{\nabla}_{X}Y=\nabla _{X}Y+g(AX,Y)\nu ,  \label{6}
\end{equation}%
\begin{equation}
\tilde{\nabla}_{X}\nu =-AX  \label{7}
\end{equation}%
for any $X,Y\in \Gamma (TM)$, where $g$ denotes the Riemannian metric of $%
M^{n}$ induced from $\tilde{g}$ and $A$ is the shape operator of $M^{n}$.

\begin{proposition}
\label{HYPERSURFACES}Let $(\tilde{M}^{n+1},<,>,\tilde{\nabla},J)$ be a
locally metallic Riemannian manifold. If $(M^{n},g,\nabla ,\phi )$ is a
quadratic metric $\phi $-hypersurface of $\tilde{M}^{n+1}$, then%
\begin{equation}
(\nabla _{X}\phi )Y=\eta (Y)AX+g(AX,Y)\xi ,  \label{8}
\end{equation}%
\begin{equation}
\nabla _{X}\xi =pAX-\phi AX,\text{ }A\xi =0,  \label{9a}
\end{equation}%
and%
\begin{equation}
(\nabla _{X}\eta )Y=pg(AX,Y)-g(AX,\phi Y)  \label{9}
\end{equation}
\end{proposition}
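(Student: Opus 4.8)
The plan is to differentiate the defining relations $JX=\phi X+\eta(X)\nu$, $J\nu=q\xi+p\nu$ and $J\xi=\nu$ of Theorem~4.1 along the Levi-Civita connection $\tilde\nabla$, use that $\tilde\nabla J=0$ (local metallic hypothesis), and then split the resulting identities into their tangential and normal components via the Gauss--Weingarten formulas \eqref{6} and \eqref{7}. Concretely, for $X,Y\in\Gamma(TM)$ I would expand $\tilde\nabla_X(JY)=J\tilde\nabla_XY$ on both sides: the left side becomes $\tilde\nabla_X(\phi Y+\eta(Y)\nu)=\nabla_X(\phi Y)+g(AX,\phi Y)\nu+X(\eta(Y))\nu-\eta(Y)AX$, while the right side is $J(\nabla_XY+g(AX,Y)\nu)=\phi\nabla_XY+\eta(\nabla_XY)\nu+g(AX,Y)(q\xi+p\nu)$. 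Comparing tangential parts gives $(\nabla_X\phi)Y=\nabla_X(\phi Y)-\phi\nabla_XY=\eta(Y)AX+g(AX,Y)\xi$, which is \eqref{8}; comparing normal parts gives the auxiliary relation $(\nabla_X\eta)Y=X(\eta(Y))-\eta(\nabla_XY)=pg(AX,Y)-g(AX,\phi Y)$, which is \eqref{9} once one rewrites $g(\phi Y,AX)$ using symmetry of $A$.

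Next I would treat the $\xi$-equation. Differentiating $J\xi=\nu$ and using $\tilde\nabla J=0$ together with the Weingarten formula \eqref{7} yields $J\tilde\nabla_X\xi=\tilde\nabla_X\nu=-AX$. Writing $\tilde\nabla_X\xi=\nabla_X\xi+g(AX,\xi)\nu$ via \eqref{6}, applying $J$ through \eqref{1}--\eqref{3}, we get $\phi\nabla_X\xi+\eta(\nabla_X\xi)\nu+g(AX,\xi)(q\xi+p\nu)=-AX$. The normal component forces $g(AX,\xi)=0$ for all $X$ (after also using $\eta(\nabla_X\xi)$, which is handled by noting $\eta(\xi)=1$ is constant so $\eta(\nabla_X\xi)=-(\nabla_X\eta)\xi$ and that $g(AX,\xi)$-terms must vanish by normal matching), hence $A\xi=0$; the tangential component then reads $\phi\nabla_X\xi=-AX$. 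To solve this for $\nabla_X\xi$ I would apply $\phi$ once more and use the quadratic identity \eqref{5}: $\phi^2\nabla_X\xi=-\phi AX$, i.e. $p\phi\nabla_X\xi+q(\nabla_X\xi-\eta(\nabla_X\xi)\xi)=-\phi AX$, and since $\phi\nabla_X\xi=-AX$ and $\eta(\nabla_X\xi)=0$ (because $\eta(\xi)$ is constant and $A\xi=0$), this collapses to $q\nabla_X\xi=-pAX+\phi AX$, hence $\nabla_X\xi=\tfrac1q(\phi AX-pAX)$. Here there is a discrepancy with the stated \eqref{9a}: the paper writes $\nabla_X\xi=pAX-\phi AX$, so presumably in their normalization $J\nu=q\xi+p\nu$ the consistency $J^2\nu=pJ\nu+qI\nu$ forces $q^2=q$ on the relevant component, effectively $q=1$ in the $\xi$-direction, or the sign/placement of $p,q$ in \eqref{2} is chosen precisely so that the clean form $\nabla_X\xi=pAX-\phi AX$ comes out; I would reconcile this carefully, most likely by re-deriving the constraint that \eqref{1}--\eqref{3} impose on $p,q$ from $J^2=pJ+qI$ applied to $\nu$.

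The main obstacle, as flagged above, is exactly this bookkeeping of the constants $p,q$ and signs: one must verify that the ansatz \eqref{1}--\eqref{3} is internally consistent with $J^2=pJ+qI$ (applying $J^2$ to $\nu$ gives $J(q\xi+p\nu)=q\nu+p(q\xi+p\nu)=pq\xi+(q+p^2)\nu$, which must equal $pJ\nu+q\nu=pq\xi+(p^2+q)\nu$ — this checks out, so \eqref{2}--\eqref{3} are consistent with no extra constraint) and then to make sure the Weingarten sign conventions in \eqref{6}--\eqref{7} feed through to give precisely \eqref{8}, \eqref{9a}, \eqref{9} rather than variants off by a sign or a factor. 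Once the constants are pinned down, everything else is a routine tangential/normal decomposition. I would therefore organize the write-up as: (i) differentiate \eqref{1}, split, get \eqref{8} and \eqref{9}; (ii) differentiate \eqref{3}, split, get $A\xi=0$ and $\phi\nabla_X\xi=-AX$; (iii) apply $\phi$ and invoke \eqref{5} to isolate $\nabla_X\xi$ and obtain \eqref{9a}.
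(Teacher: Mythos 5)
Your overall strategy (differentiate \eqref{1} and \eqref{3} using $\tilde{\nabla}J=0$ and split via the Gauss--Weingarten formulas \eqref{6}--\eqref{7}) is the same as the paper's, but there is a genuine gap at precisely the point you flag and then fail to close: the factor $q$. Your own tangential computation gives $(\nabla _{X}\phi )Y=\eta (Y)AX+q\,g(AX,Y)\xi $ (the term $g(AX,Y)(q\xi +p\nu )$ contributes $q\,g(AX,Y)\xi $ tangentially), which is the paper's \eqref{9c}, not \eqref{8} --- you silently dropped the $q$ there; and your $\xi $-computation leaves $\nabla _{X}\xi =\frac{1}{q}(pAX-\phi AX)$ (note also a sign slip: substituting $\phi \nabla _{X}\xi =-AX$ into $p\phi \nabla _{X}\xi +q\nabla _{X}\xi =-\phi AX$ gives $q\nabla _{X}\xi =pAX-\phi AX$, not $-pAX+\phi AX$). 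Your proposed reconciliation --- extracting a constraint on $p,q$ by applying $J^{2}=pJ+qI$ to $\nu $ --- cannot work, as you verified yourself: \eqref{2}--\eqref{3} are algebraically consistent for every $p,q$, so no constraint arises from that route, and the discrepancy with \eqref{8} and \eqref{9a} is left unresolved in your write-up.

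The missing ingredient is the metric hypothesis, not the algebraic one. Since $M^{n}$ is assumed to be a quadratic \emph{metric} $\phi $-hypersurface, the induced data satisfy \eqref{MR}; on the other hand, the metallic compatibility of $J$ gives, for tangent $X,Y$, $g(JX,JY)=p\,g(X,\phi Y)+q\,g(X,Y)$, while expanding $g(JX,JY)$ through \eqref{1} and then \eqref{MR} gives $p\,g(X,\phi Y)+q\,g(X,Y)+(1-q)\eta (X)\eta (Y)$. Comparing the two forces $q=1$; this is exactly the step the paper carries out in \eqref{9e}--\eqref{9f}. Once $q=1$ is in hand, your tangential identity becomes \eqref{8}, your expression for $\nabla _{X}\xi $ becomes \eqref{9a} (after fixing the sign), and the remainder of your outline --- $A\xi =0$ (which the paper instead gets by applying $\eta $ to $\phi \nabla _{X}\xi =-AX-g(AX,\xi )\xi $, while you get it from the normal component of differentiating \eqref{3}, which is also fine since $p\neq 0$ for a metallic structure) and the derivation of \eqref{9} --- goes through as you describe.
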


\begin{proof}
If we take the covariant derivatives of the metallic structure tensor $J$
according to $X$ by (\ref{1})-(\ref{3}), the Gauss and Weingarten formulas,
we get%
\begin{eqnarray}
0 &=&(\nabla _{X}\phi )Y-\eta (Y)AX-qg(AX,Y)\xi   \label{9b} \\
&&+(g(AX,\phi Y)+X(\eta (Y))-\eta (\nabla _{X}Y)-pg(AX,Y))\nu .  \notag
\end{eqnarray}%
If we identify the tangential components and the normal components of the
equation (\ref{9b}), respectively, we have%
\begin{equation}
(\nabla _{X}\phi )Y-\eta (Y)AX-qg(AX,Y)\xi =0.  \label{9c}
\end{equation}%
\begin{equation}
g(AX,\phi Y)+X(\eta (Y))-\eta (\nabla _{X}Y)-pg(AX,Y)=0.  \label{9d}
\end{equation}%
Using compatible condition of $J$, we have 
\begin{eqnarray}
g(JX,JY) &=&pg(X,JY)+qg(X,Y)  \notag \\
&&\overset{(\ref{1})}{=}pg(X,\phi Y)+qg(X,Y).  \label{9e}
\end{eqnarray}%
Expressed in another way, we obtain 
\begin{eqnarray}
&&g(JX,JY)\overset{(\ref{1})}{=}g(\phi X,\phi Y)+\eta (X)\eta (Y),  \notag \\
&&\overset{(\ref{MR})}{=}pg(X,\phi Y)+q(g(X,Y)-\eta (X)\eta (Y))  \notag \\
&&+\eta (X)\eta (Y)  \notag \\
&=&pg(X,\phi Y)+qg(X,Y)+(1-q)\eta (X)\eta (Y)).  \label{9f}
\end{eqnarray}%
Considering (\ref{9e}) and (\ref{9f}), we get $q=1$. By (\ref{9c}) we arrive
(\ref{8}). If we put $Y=\xi $ in (\ref{9c}) we get%
\begin{equation}
\phi \nabla _{X}\xi =-AX-g(AX,\xi )\xi .  \label{10}
\end{equation}%
If we apply $\xi $ on both sides of (\ref{10}), we have $A\xi =0$.

Acting $\phi $ on both sides of the equation (\ref{10}), using $A\xi =0$ and
so 
\begin{eqnarray*}
-\phi AX &=&p\phi \nabla _{X}\xi +(\nabla _{X}\xi -\eta (\nabla _{X}\xi )\xi
) \\
&&\overset{(\ref{10})}{=}-pAX+\nabla _{X}\xi .
\end{eqnarray*}%
Hence we arrive the first equation of (\ref{9a}). By help of (\ref{9a}), we
readily obtain (\ref{9}). This completes the proof.
\end{proof}

\begin{proposition}[\protect \cite{blair}]
\label{killing}Let $(M,g)$ be a Riemannian manifold and let $\nabla $ be the
Levi-Civita connection on $M$ induced by $g$. For every vector field $X$ on $%
M$, the following conditions are equivalent:

(1) $X$ is a Killing vector field; that is, $L_{X}g=0$.

(2) $g(\nabla _{Y}X,Z)+g(\nabla _{Z}X,Y)=0$ for all $Y,Z\in \chi (M)$.
\end{proposition}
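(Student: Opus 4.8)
The plan is to compute the Lie derivative $L_{X}g$ explicitly in terms of the Levi-Civita connection and show that it coincides with the symmetrized covariant derivative of $X$. First I would recall the coordinate-free expression for the Lie derivative of a $(0,2)$-tensor field: for all $Y,Z\in \chi (M)$,
\[
(L_{X}g)(Y,Z)=X(g(Y,Z))-g(L_{X}Y,Z)-g(Y,L_{X}Z)=X(g(Y,Z))-g([X,Y],Z)-g(Y,[X,Z]).
\]

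Next I would invoke the two defining properties of the Levi-Civita connection $\nabla $. Metric compatibility gives $X(g(Y,Z))=g(\nabla _{X}Y,Z)+g(Y,\nabla _{X}Z)$, while vanishing of the torsion gives $[X,Y]=\nabla _{X}Y-\nabla _{Y}X$ and $[X,Z]=\nabla _{X}Z-\nabla _{Z}X$. Substituting these three identities into the formula above, the terms $g(\nabla _{X}Y,Z)$ and $g(Y,\nabla _{X}Z)$ cancel, leaving
\[
(L_{X}g)(Y,Z)=g(\nabla _{Y}X,Z)+g(\nabla _{Z}X,Y).
\]

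With this identity established, the equivalence is immediate: $X$ is a Killing vector field, i.e. $L_{X}g=0$, precisely when $(L_{X}g)(Y,Z)=0$ for all $Y,Z\in \chi (M)$, which by the displayed formula is exactly condition (2). Since both sides of the identity are $C^{\infty }(M)$-linear (tensorial) in $Y$ and $Z$, the condition could equivalently be checked pointwise on a local frame, although no such reduction is actually needed.

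There is no genuine obstacle in this argument; it is a direct computation. The only point requiring care is applying the torsion-free and metric-compatibility conditions in tandem and correctly tracking which covariant-derivative terms cancel, since a sign slip there would yield the antisymmetrized derivative instead of the symmetrized one and thus characterize the wrong class of vector fields.
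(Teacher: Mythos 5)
Your computation is correct: the identity $(L_{X}g)(Y,Z)=g(\nabla _{Y}X,Z)+g(\nabla _{Z}X,Y)$, obtained from metric compatibility and torsion-freeness, immediately gives the equivalence. The paper itself offers no proof of this proposition (it is quoted from Blair's book), and your argument is precisely the standard one found in that reference, so there is nothing to object to.
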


\begin{proposition}
Let $(M^{n},g,\nabla ,\phi ,\eta ,\xi )$ be a quadratic metric $\phi $%
-hypersurface of a locally metallic Riemannian manifold $(\tilde{M}^{n+1},%
\tilde{g},\tilde{\nabla},J)$. The characteristic vector field $\xi $ is a
Killing vector field if and only if $\phi A+A\phi =2pA$.
\end{proposition}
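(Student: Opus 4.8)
The plan is to use Proposition~\ref{killing}: the vector field $\xi$ is Killing precisely when $g(\nabla_Y\xi,Z)+g(\nabla_Z\xi,Y)=0$ for all $Y,Z\in\Gamma(TM)$. So I would start from the explicit formula for $\nabla_X\xi$ supplied by Proposition~\ref{HYPERSURFACES}, namely $\nabla_X\xi = pAX-\phi AX$, and compute the symmetric bilinear form
\begin{equation*}
S(Y,Z):=g(\nabla_Y\xi,Z)+g(\nabla_Z\xi,Y)=g(pAY-\phi AY,Z)+g(pAZ-\phi AZ,Y).
\end{equation*}
Since $A$ is self-adjoint ($g(AY,Z)=g(Y,AZ)$), the terms $p\,g(AY,Z)+p\,g(AZ,Y)$ collapse to $2p\,g(AY,Z)$. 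For the remaining terms I would use the compatibility of $g$ with $\phi$ (equation~(\ref{CR}), $g(\phi X,W)=g(X,\phi W)$, which holds on the hypersurface since the induced structure is a quadratic metric $\phi$-structure and one checks $\phi$ is $g$-symmetric there): $g(\phi AY,Z)=g(AY,\phi Z)=g(Y,A\phi Z)$, and similarly $g(\phi AZ,Y)=g(Z,\phi AY)=g(A\phi Y,Z)$ — the last step again using self-adjointness of $A$ and symmetry of $\phi$. Thus
\begin{equation*}
S(Y,Z)=2p\,g(AY,Z)-g\bigl((\phi A+A\phi)Y,\,Z\bigr)=g\bigl((2pA-\phi A-A\phi)Y,\,Z\bigr).
\end{equation*}

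From here the conclusion is immediate: $S\equiv 0$ for all $Y,Z$ if and only if the self-adjoint operator $2pA-\phi A-A\phi$ vanishes, i.e. $\phi A+A\phi=2pA$. One should double-check that $2pA-\phi A-A\phi$ is indeed $g$-self-adjoint so that vanishing of the associated quadratic form forces vanishing of the operator; this follows because $A$ and $\phi$ are each self-adjoint, hence $\phi A$ has adjoint $A\phi$ and $A\phi$ has adjoint $\phi A$, so their sum is self-adjoint, as is $2pA$. Therefore $S(Y,Z)=0$ for all $Y,Z$ already gives the operator identity directly (no need even to invoke symmetrization separately).

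I expect no serious obstacle here; the computation is short. The one point that requires a little care is justifying the symmetry relation $g(\phi X,Y)=g(X,\phi Y)$ on $M^n$: the ambient metallic structure satisfies $\tilde g(JX,Y)=\tilde g(X,JY)$, and restricting via $JX=\phi X+\eta(X)\nu$ together with $g=\tilde g|_{TM}$ one recovers $g(\phi X,Y)=g(X,\phi Y)$ for $X,Y$ tangent to $M^n$ (the normal components drop out because $Y\perp\nu$). This is exactly the statement that the induced structure is an almost quadratic \emph{metric} $\phi$-structure, which is part of the running hypothesis of the section, so it may be cited rather than reproved. With that in hand, the equivalence $L_\xi g=0\iff \phi A+A\phi=2pA$ follows by the displayed chain of identities.
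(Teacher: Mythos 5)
Your proposal is correct and follows essentially the same route as the paper: apply the Killing criterion $g(\nabla_Y\xi,Z)+g(\nabla_Z\xi,Y)=0$, substitute $\nabla_X\xi=pAX-\phi AX$ from Proposition \ref{HYPERSURFACES}, and use the $g$-symmetry of $A$ and $\phi$ to obtain $2pg(AX,Y)=g(\phi AX,Y)+g(A\phi X,Y)$, hence $\phi A+A\phi=2pA$. The only blemish is the intermediate equality $g(\phi AZ,Y)=g(Z,\phi AY)$, which would presuppose $\phi A=A\phi$; the correct chain is $g(\phi AZ,Y)=g(AZ,\phi Y)=g(Z,A\phi Y)$, and since your final identity uses only these correct endpoints, the argument stands.
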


\begin{proof}
From Proposition \ref{killing}, we have%
\begin{equation*}
g(\nabla _{X}\xi ,Y)+g(\nabla _{Y}\xi ,X)=0.
\end{equation*}%
Making use of (\ref{9a}) in the last equation, we get%
\begin{equation*}
pg(AX,Y)-g(\phi AX,Y)+pg(AY,X)-g(\phi AY,X)=0.
\end{equation*}%
Using the symmetric property of $A$ and $\phi $, we obtain%
\begin{equation}
2pg(AX,Y)=g(\phi AX,Y)+g(A\phi X,Y).  \label{irem}
\end{equation}%
We arrive the requested equation from (\ref{irem}).
\end{proof}

\begin{proposition}
\label{SINEM}If $(M^{n},g,\nabla ,\phi ,\xi )$ is a $(\beta ,\phi )$%
-Kenmotsu quadratic hypersurface of a locally metallic Riemannian manifold
on $(\tilde{M}^{n+1},\tilde{g},\tilde{\nabla},J)$, then $\phi A=A\phi $ and $%
A^{2}=\beta pA+\beta ^{2}(I-\eta \otimes \xi )$.
\end{proposition}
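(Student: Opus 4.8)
The plan is to exploit the fact that a $(\beta,\phi)$-Kenmotsu quadratic hypersurface satisfies two structural identities simultaneously: the general hypersurface identities of Proposition~\ref{HYPERSURFACES}, and the defining Kenmotsu condition \eqref{K1} together with its consequence \eqref{K2}. Comparing these two descriptions of $(\nabla_X\phi)Y$ and of $\nabla_X\xi$ should force the shape operator $A$ into a very rigid form, from which both conclusions follow.

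First I would equate the two expressions for $(\nabla_X\phi)Y$. By \eqref{8} we have $(\nabla_X\phi)Y=\eta(Y)AX+g(AX,Y)\xi$, while by the Kenmotsu condition \eqref{K1} we have $(\nabla_X\phi)Y=\beta\{g(X,\phi Y)\xi+\eta(Y)\phi X\}$. Setting these equal and using $q=1$ (established in the proof of Proposition~\ref{HYPERSURFACES}), I would compare coefficients. Contracting with a vector field $Z$ and separating the $\xi$-component from the rest: the component along $\xi$ gives $g(AX,Y)=\beta g(X,\phi Y)=-\beta g(\phi X,Y)$ after using symmetry of $\phi$, hence the operator identity
\begin{equation*}
AX=-\beta\,\phi X+(\text{a term along }\xi).
\end{equation*}
More carefully, matching the terms transverse to $\xi$ forces $\eta(Y)AX=\beta\eta(Y)\phi X$ on the complement of $\xi$, i.e.\ $AX$ and $\beta\phi X$ agree modulo $\xi$; combined with $A\xi=0$ from \eqref{9a} and $\phi\xi=0$, this pins down $A=-\beta\phi$ on all of $TM$ once one checks the $\eta$-components are consistent (both $A\xi$ and $\phi\xi$ vanish, and $\eta(AX)=g(AX,\xi)=g(A\xi,X)=0$ by symmetry of $A$). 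With $A=-\beta\phi$ (equivalently $\phi A=A\phi$, since $\phi$ commutes with itself), the first conclusion $\phi A=A\phi$ is immediate.

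Next I would cross-check with the two formulas for $\nabla_X\xi$: from \eqref{9a}, $\nabla_X\xi=pAX-\phi AX$, and from \eqref{K2}, $\nabla_X\xi=-\beta(X-\eta(X)\xi)$. Substituting $A=-\beta\phi$ into \eqref{9a} gives $\nabla_X\xi=-\beta p\,\phi X+\beta\phi^2 X=-\beta p\,\phi X+\beta(p\phi X+X-\eta(X)\xi)=\beta(X-\eta(X)\xi)$ using \eqref{5}; comparing with \eqref{K2} shows the two agree only up to sign, which I expect signals that the correct relation is $A=\beta\phi$ rather than $-\beta\phi$ (the sign ambiguity traces back to the choice of unit normal $\nu$, so I would simply fix signs consistently at the start). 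In either case the key output is that $A$ is a scalar multiple of $\phi$. Finally, for $A^2$: squaring $A=\beta\phi$ and applying \eqref{5} (with $q=1$) gives $A^2X=\beta^2\phi^2X=\beta^2(p\phi X+X-\eta(X)\xi)=\beta p(\beta\phi X)+\beta^2(X-\eta(X)\xi)=\beta p\,AX+\beta^2(I-\eta\otimes\xi)X$, which is exactly $A^2=\beta pA+\beta^2(I-\eta\otimes\xi)$.

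The main obstacle I anticipate is the bookkeeping of signs and of the $\xi$-components when extracting $A=\pm\beta\phi$ from the equality of the two $(\nabla_X\phi)Y$ formulas — one must carefully use $A\xi=0$, $\phi\xi=0$, $\eta\circ\phi=0$, $\eta(AX)=0$, and the symmetry of both $A$ and $\phi$ to conclude that agreement modulo $\xi$ upgrades to genuine equality on all of $TM$. Once $A=\beta\phi$ (up to sign) is in hand, everything else is a one-line substitution into \eqref{5}.
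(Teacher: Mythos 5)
Your proposal reaches the right conclusions by essentially the paper's route: the paper likewise equates \eqref{8} with \eqref{K1}, takes the $\xi$-component to obtain $\beta g(X,\phi Y)=g(AX,Y)$, i.e.\ $AX=\beta\phi X$ (equation \eqref{mert}), and then squares this using \eqref{5} with $q=1$ to get $A^{2}=\beta pA+\beta^{2}(I-\eta\otimes\xi)$. Two remarks. First, the minus sign you introduce is your own slip, not an artifact of the choice of unit normal: the compatibility condition \eqref{CR} gives $g(X,\phi Y)=+\,g(\phi X,Y)$, so the $\xi$-component already yields $A=\beta\phi$ with the correct sign, and your subsequent ``correction'' via the comparison of \eqref{K2} with \eqref{9a} is repairing an error you created rather than resolving a genuine orientation ambiguity; indeed your own ``more careful'' transverse matching $\eta(Y)AX=\beta\eta(Y)\phi X$ (take $Y=\xi$ and use $\eta(AX)=0$, $A\xi=\phi\xi=0$) gives $A=+\beta\phi$ outright, contradicting the $A=-\beta\phi$ you state at that point. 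Second, a small structural difference: the paper proves $\phi A=A\phi$ first and independently, from $d\eta=0$ (a consequence of \eqref{K2}) together with $\nabla_{X}\xi=pAX-\phi AX$, which yields $g(A\phi X-\phi AX,Y)=0$; in your version commutativity simply falls out of $A=\beta\phi$, which is fine and slightly shorter, since the paper's separate commutativity argument becomes redundant once \eqref{mert} is known. The final computation of $A^{2}$ is the same in both.
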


\begin{proof}
Since $d\eta =0,$ using (\ref{9a}), we have%
\begin{eqnarray*}
0 &=&g(Y,\nabla _{X}\xi )-g(X,\nabla _{Y}\xi ) \\
&=&pg(Y,AX)-g(Y,\phi AX)-pg(X,AY)+g(X,\phi AY) \\
&=&g(A\phi X-\phi AX,Y).
\end{eqnarray*}%
So, we get $\phi A=A\phi $. By (\ref{K1}) and (\ref{8}), we get 
\begin{equation*}
\beta (g(X,\phi Y)\xi +\eta (Y)\phi X)=\eta (Y)AX+g(AX,Y)\xi .
\end{equation*}%
If we act $\xi $ on both sides of the last equation, we obtain%
\begin{equation*}
\beta g(X,\phi Y)=g(AX,Y).
\end{equation*}%
Namely%
\begin{equation}
\beta \phi X=AX.  \label{mert}
\end{equation}%
Putting $AX$ \ instead of $X$ and using (\ref{5}) in (\ref{mert}), we get $%
A^{2}X=\beta pAX+\beta ^{2}(X-\eta (X)\xi ).$ This completes the proof.
\end{proof}

By help of (\ref{mert}) we obtain

\begin{corollary}
Let ($M^{n},g,\nabla ,\phi ,\xi )$ be a cosymplectic quadratic metric $\phi $%
-hypersurface of a locally metallic Riemannian manifold. Then $M$ is totally
geodesic.
\end{corollary}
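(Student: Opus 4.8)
The plan is to read the conclusion off directly from Proposition \ref{SINEM}. Recall that, by definition, a cosymplectic quadratic metric $\phi$-manifold is precisely a $(\beta,\phi)$-Kenmotsu quadratic metric manifold with $\beta=0$; in particular $(\nabla_X\phi)Y=0$ and $\nabla_X\xi=0$ for all $X,Y\in\Gamma(TM)$. So the hypothesis of the corollary is exactly the $\beta=0$ instance of the situation treated in Proposition \ref{SINEM}, with the ambient $\tilde M^{n+1}$ locally metallic as required there.

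First I would invoke the derivation inside the proof of Proposition \ref{SINEM} that produced equation (\ref{mert}), namely $AX=\beta\phi X$. That computation only combined the $(\beta,\phi)$-Kenmotsu condition (\ref{K1}) with the hypersurface identity (\ref{8}) and then contracted with $\xi$; it never used $\beta\neq0$. Hence it is valid verbatim for $\beta=0$ and gives $AX=0$ for every $X\in\Gamma(TM)$, i.e. the shape operator $A$ of $M^{n}$ vanishes identically.

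Finally, substituting $A\equiv0$ into the Gauss formula (\ref{6}) yields $\tilde\nabla_{X}Y=\nabla_{X}Y$ for all $X,Y\in\Gamma(TM)$, so the second fundamental form $g(AX,Y)\nu$ of $M^{n}$ in $\tilde M^{n+1}$ is identically zero; by definition this means $M^{n}$ is totally geodesic, completing the proof. There is essentially no obstacle here: the corollary is a one-line specialization, and the only point worth verifying is that the passage from (\ref{K1}) and (\ref{8}) to (\ref{mert}) in Proposition \ref{SINEM} is insensitive to whether $\beta$ vanishes, which it is.
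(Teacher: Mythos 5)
Your proof is correct and matches the paper's argument: the paper deduces the corollary directly from equation (\ref{mert}), $AX=\beta\phi X$, by setting $\beta=0$, exactly as you do, and your extra check that the derivation of (\ref{mert}) never uses $\beta\neq 0$ is a sound (if implicit in the paper) observation.
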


\begin{remark}
C. E. Hretcanu and M. Crasmareanu \cite{H3} \ investigated\ some properties
of the induced structure on a hypersurface in a metallic Riemannian
manifold. But the argument in Proposition \ref{HYPERSURFACES} is to get
quadratic $\phi $-hypersurface of a metallic Riemannian manifold. In the
same paper, they proved that the induced structure on $M$ is parallel to
induced Levi-Civita connection if and only if $M$ is totally geodesic.
\end{remark}

By Proposition \ref{HYPERSURFACES}, we have following.

\begin{proposition}
Let $(M^{n},g,\nabla ,\phi ,\xi )$ be a quadratic metric $\phi $%
-hypersurface of a locally metallic Riemannian manifold.\ Then 
\begin{equation*}
R(X,Y)\xi =p((\nabla _{X}A)Y-(\nabla _{Y}A)X)-\phi ((\nabla _{X}A)Y-(\nabla
_{Y}A)X),
\end{equation*}%
for any $X,Y\in \Gamma (TM).$
\end{proposition}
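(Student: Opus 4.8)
The plan is to compute $R(X,Y)\xi$ straight from its definition
\[
R(X,Y)\xi=\nabla _{X}\nabla _{Y}\xi -\nabla _{Y}\nabla _{X}\xi -\nabla _{[X,Y]}\xi ,
\]
feeding in the three identities supplied by Proposition \ref{HYPERSURFACES}: the formula $\nabla _{X}\xi =pAX-\phi AX$ from (\ref{9a}), the fact $A\xi =0$, and the covariant derivative $(\nabla _{X}\phi )Y=\eta (Y)AX+g(AX,Y)\xi $ from (\ref{8}). No curvature identity of $\tilde{M}$ is needed, since $\xi $ is tangent and everything is already expressed through $\nabla $, $\phi $ and $A$ on $M$.

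First I would differentiate $\nabla _{X}\xi =pAX-\phi AX$ in the direction $Y$. Writing $\nabla _{Y}(AX)=(\nabla _{Y}A)X+A\nabla _{Y}X$ and $\nabla _{Y}(\phi AX)=(\nabla _{Y}\phi )(AX)+\phi \big((\nabla _{Y}A)X+A\nabla _{Y}X\big)$, and noting that by (\ref{8})
\[
(\nabla _{Y}\phi )(AX)=\eta (AX)AY+g(AY,AX)\xi =g(AX,AY)\xi ,
\]
where $\eta (AX)=g(AX,\xi )=g(X,A\xi )=0$ by $A\xi =0$, one arrives at
\[
\nabla _{Y}\nabla _{X}\xi =p(\nabla _{Y}A)X+pA\nabla _{Y}X-g(AX,AY)\xi -\phi \big((\nabla _{Y}A)X\big)-\phi (A\nabla _{Y}X).
\]

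Next I would antisymmetrize in $X$ and $Y$. Since $A$ is self-adjoint, the term $g(AX,AY)\xi $ is symmetric in $X,Y$ and therefore cancels, leaving
\[
\nabla _{X}\nabla _{Y}\xi -\nabla _{Y}\nabla _{X}\xi =p\big((\nabla _{X}A)Y-(\nabla _{Y}A)X\big)-\phi \big((\nabla _{X}A)Y-(\nabla _{Y}A)X\big)+(pA-\phi A)(\nabla _{X}Y-\nabla _{Y}X).
\]
Finally, because $[X,Y]=\nabla _{X}Y-\nabla _{Y}X$, formula (\ref{9a}) gives $\nabla _{[X,Y]}\xi =(pA-\phi A)(\nabla _{X}Y-\nabla _{Y}X)$, which exactly cancels the last bracket and yields the asserted identity. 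The computation is entirely routine; the only step that demands attention is the bookkeeping of the $(\nabla \phi )$-terms, namely recognizing that the single genuinely new contribution $(\nabla _{Y}\phi )(AX)$ collapses to a multiple of $\xi $ thanks to $A\xi =0$, and is symmetric in $X,Y$ so that it disappears under antisymmetrization.
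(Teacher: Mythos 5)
Your proof is correct and is exactly the argument the paper intends: the paper gives no written proof, stating only that the result follows ``By Proposition \ref{HYPERSURFACES}'', and your computation supplies precisely that routine expansion of $R(X,Y)\xi$ using (\ref{9a}), (\ref{8}) and $A\xi=0$, with the $g(AX,AY)\xi$ term correctly discarded by symmetry. Nothing further is needed.
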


\begin{corollary}
Let $(M^{n},g,\nabla ,\phi ,\xi )$ be a quadratic metric $\phi $-
hypersurface of a locally metallic Riemannian manifold.\ If the second
fundamental form is parallel, then $R(X,Y)\xi =0.$
\end{corollary}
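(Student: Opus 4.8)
The plan is to compute the Riemannian curvature tensor $R(X,Y)\xi$ directly from the formula for $\nabla_X\xi$ given in Proposition \ref{HYPERSURFACES}, namely $\nabla_X\xi = pAX - \phi AX$, together with the companion identities $A\xi = 0$ and $(\nabla_X\phi)Y = \eta(Y)AX + g(AX,Y)\xi$. First I would expand $R(X,Y)\xi = \nabla_X\nabla_Y\xi - \nabla_Y\nabla_X\xi - \nabla_{[X,Y]}\xi$ by substituting $\nabla_Y\xi = pAY - \phi AY$ into the first term, which produces $\nabla_X(pAY - \phi AY) = p\nabla_X(AY) - (\nabla_X\phi)(AY) - \phi\nabla_X(AY)$. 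The key is to recognize the Codazzi-type combination: using the definition of the covariant derivative of the shape operator, $\nabla_X(AY) = (\nabla_X A)Y + A\nabla_X Y$, so the $p$-terms and the $\phi\circ(\cdot)$-terms will reorganize around $(\nabla_X A)Y - (\nabla_Y A)X$ once the antisymmetrization in $X,Y$ is carried out and the $A\nabla_{[X,Y]}\xi$ pieces cancel against $\nabla_{[X,Y]}\xi$.

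Next I would deal with the leftover terms coming from $(\nabla_X\phi)(AY)$. By the identity (\ref{8}), $(\nabla_X\phi)(AY) = \eta(AY)AX + g(AX,AY)\xi$. Here the crucial simplification is that $\eta(AY) = g(AY,\xi) = g(Y,A\xi) = 0$ since $A$ is symmetric and $A\xi = 0$; thus $(\nabla_X\phi)(AY) = g(AX,AY)\xi$. When I antisymmetrize in $X$ and $Y$, the contribution $g(AX,AY)\xi - g(AY,AX)\xi = 0$ vanishes by symmetry of $A$. This is the step that makes everything collapse cleanly, so I expect this to be where the real content of the computation sits — it is the point where one must use $A\xi=0$ and the symmetry of $A$ in tandem, rather than just formally pushing derivatives around.

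After those cancellations, what survives is exactly $p\big((\nabla_X A)Y - (\nabla_Y A)X\big) - \phi\big((\nabla_X A)Y - (\nabla_Y A)X\big)$, which is the claimed formula. I would also double-check that the terms $pA\nabla_X Y - \phi A\nabla_X Y$ and their $X\leftrightarrow Y$ counterparts combine into $pA[X,Y] - \phi A[X,Y]$, which is precisely $\nabla_{[X,Y]}\xi$ written via (\ref{9a}), so that the final $-\nabla_{[X,Y]}\xi$ term in the curvature cancels it. The main obstacle is purely bookkeeping: keeping the three groups of terms (the $(\nabla A)$ terms, the $\phi$-of-those terms, and the curvature-of-$\phi$ terms in $g(AX,AY)\xi$) correctly separated while antisymmetrizing; there is no conceptual difficulty beyond the careful use of $A\xi=0$, the symmetry of $A$, and identity (\ref{8}). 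The corollary is then immediate: if $\nabla A = 0$ then $(\nabla_X A)Y - (\nabla_Y A)X = 0$, so $R(X,Y)\xi = 0$.
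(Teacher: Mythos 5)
Your proposal is correct and follows essentially the same route as the paper: the corollary is an immediate consequence of the preceding proposition giving $R(X,Y)\xi = p\big((\nabla _{X}A)Y-(\nabla _{Y}A)X\big)-\phi \big((\nabla _{X}A)Y-(\nabla _{Y}A)X\big)$, which the paper obtains from Proposition \ref{HYPERSURFACES} exactly by the computation you carry out (expanding $R(X,Y)\xi$ with $\nabla _{X}\xi =pAX-\phi AX$, and using $A\xi =0$, the symmetry of $A$, and identity (\ref{8})). In effect you have supplied the curvature computation the paper leaves implicit and then concluded, as the paper does, that $\nabla A=0$ kills the right-hand side.
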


\end{document}